\definecolor{cof}{RGB}{219,144,71}
\definecolor{pur}{RGB}{186,146,162}
\definecolor{greeo}{RGB}{91,173,69}
\definecolor{greet}{RGB}{52,111,72}
\newtheorem{teo}{Theorem}[section]
\newtheorem{lem}[teo]{Lemma}
\newtheorem{pro}[teo]{Proposition}
\newtheorem{cor}[teo]{Corollary}
\newtheorem{Rem}[teo]{Remark}
\newtheorem{ex}[teo]{Example}
\newtheorem{defi}[teo]{Definition}
\newcommand{\nat}{\mathbb{N}}
\newcommand{\inte}{\mathbb{Z}}
\newcommand{\field}{\mathbb{K}}
\newcommand{\Ffield}{\mathbb{K}}
\newcommand{\di}{\operatorname{dim}}
\title{Syzygies of the Veronese modules}
\author{Ornella Greco, Ivan Martino}
\date{\empty}
\begin{document}
\maketitle

\begin{abstract}
\begin{small}
\noindent
We study the minimal free resolution of the Veronese modules, $S_{n,d,k}=\oplus_{i\ge0}S_{k+id}$, where $S=\field[x_1,\ldots,x_n]$, by giving a formula for the Betti numbers in terms of the reduced homology of some skeleton of a simplicial complex. 
We prove that $S_{n,d,k}$ is Cohen-Macaulay if and only if $k<d$, and that its minimal resolution is pure and has some linearity features when $k>d(n-1)-n$.
We prove combinatorially that the resolution of $S_{2,d,k}$ is pure.
We show that $HS(S_{n,d,k}; z)= \frac{1}{(n-1)!} \frac{d^{n-1}}{dz^{n-1}}\left[ \frac{z^{k+n-1}}{1-z^d}\right]$.
As an application, we  calculate the complete Betti diagrams of the Veronese rings $\field[x,y,z]^{(d)}$, for $d=4,5$, and $\field[x,y,z, u]^{(3)}$. 
%

\end{small}
\end{abstract}

\vspace{5 mm}
\noindent
Given a graded ring $S=\oplus_{i\geq 0}S_i$, the Veronese subring $S^{(d)}$ is defined as $\oplus_{i\geq 0}S_{id}$ and the \emph{Veronese modules} $S_{n,d,k}$, which are modules over the Veronese subring, are $\oplus_{i\geq 0}S_{k+id}$.

\noindent
In this paper, we set $S=\field[x_1, \dots , x_n]$, where $\field$ is a field,
 and we deal with the syzygies of the \emph{Veronese modules}.
Since $S^{(d)}$ can be presented as $R/I$, where $R$ is a polynomial ring and $I$ a binomial ideal, in the following, we will consider $S_{n,d,k}$ as an $R$-module (see Section \ref{sec-Preliminaries}).

There has been a lot of effort already to find  the graded Betti numbers of the Veronese ring $S^{(d)}$. The problem can be really hard, namely, in \cite{EinLazarsfeld}, Ein and Lazarsfeld showed that for $d\gg 0$ the graded Betti numbers $\beta_{i,j}(S^{(d)})\neq 0$ for many $j$ if $i$ is large: in particular, they proved that, for $ d\gg 0$ there exist $l_1,l_2$ such that $\beta_{p,p+q}\neq 0$ for all $p$ in the range $
l_1 d^{q-1}\leq p\leq {d+n\choose n}-l_2 d^{n-q}$.

It is known (see references in \cite{OttavianiPaolettiVeronese})  that  $\beta_{i}=\beta_{i,(i+1)d}$, for all $i>0$, in the cases $n=2$ or $(d,n)=(2,3)$. Instead, when $d=2$ and $n>3$, we have that the equality holds for $i\leq 5$. 
In addition, for $d=2$, all Betti numbers have been determined.
In case $n,d\geq 3$, Ottaviani and Paoletti (in \cite{OttavianiPaolettiVeronese}) also proved that $\beta_{i}\neq \beta_{i,(i+1)d}$, for all $i>3d-3$, and conjectured that the equality holds for $i\leq 3d-3$. 
They proved their conjecture for $n=3$ and $(d,n)=(3,4)$.
Bruns, Conca and R\"omer, in \cite{BrunsConcaRomer}, provided another proof for $n=3$.

Moreover, in \cite{GotoWatanabe}, Goto and Watanabe proved that the Veronese module $S_{n,d,k}$ is Cohen-Macaulay  when $k<d$,  and that the canonical module of $S^{(d)}$ is given by the Veronese module $S_{n,d,d-n}$ (these results can also be  found in   \cite[Section 3]{BrunsHerzog}).

Furthermore, in \cite{AramovaBarcanescuHerzog}, Aramova, B{\u{a}}rc{\u{a}}nescu and Herzog
proved that the resolution of $S_{n,d,k}$ as $S^{(d)}$-module is linear.

In \cite{CampilloMarijuan}, Campillo and Mariju\'{a}n showed a way to compute the Betti numbers of numerical semigroup rings in terms of homology of a certain simplicial complex.
Later, in order to calculate the Betti numbers of affine semigroup rings, Bruns and Herzog, in \cite{BrunsHerzogSemi}, reintroduced Campillo and Mariju\'{a}n's simplicial complex, calling it \emph{squarefree divisor complex}, $\Delta_\mathbf{c}$ (see Section \ref{sec-Preliminaries}). 
In particular, they gave a formula for $\beta_{i,j}(S^{(d)})$ in terms of the dimension of the reduced homology of $\Delta_\mathbf{c}$. 
Recently, in \cite{Spaul}, Paul gave another description of the graded Betti numbers of semigroup rings (actually he worked in a more general environment) in terms of the reduced homology of a simplicial complex, $\Gamma_\mathbf{c}$, called \emph{pile simplicial complex}.

In this paper, we concentrate on the syzygies of $S_{n,d,k}$. We will use combinatorial methods, and precisely we will relate the Betti numbers of the Veronese modules to the simplicial complex introduced by Paul. We would like to point out that some of these results may be as well obtained using alternative approaches and using methods, like the Koszul cohomology. We are pursuing  the combinatorial approach, due to the simple proofs that it leads us to. It is worth to mention a few papers that deal with syzygies of commutative rings using similar simplicial complexes: for instance, Vu, in \cite{ThanhVu}, uses squarefree divisor complexes for proving the Koszul property of the pinched Veronese varieties; similiar approaches can also be found in \cite{CampilloGimenez} and \cite{Sturmfels}, in relation with toric ideals, and in \cite{KaraReinerWachs} and \cite{Stanley} in relation, respectively, with chessboard complexes and with $\mathbb{N}$-solutions to linear equations.\\

The key result of the paper generalizes Paul's and Bruns-Herzog's formulas to the Veronese modules.

\setcounter{section}{3}\setcounter{teo}{0}
\begin{teo}
Let $S=\oplus_{i\geq 0} S_i$ and  $S_{n,d,k}=\oplus_{i\geq 0}S_{k+id}$.
If $\mathbf{c}$ is a vector in $\inte^n$ such that $|\mathbf{c}|=k+jd$, then 
\[
  \beta_{i,\mathbf{c}}(S_{n,d,k})=\operatorname{dim}_{\Ffield} \tilde{\operatorname{H}}_{i-1}(\Gamma_{\mathbf{c}}^{\langle j-1 \rangle}, \Ffield),
\]
where $\Gamma_{\mathbf{c}}^{\langle j-1 \rangle}$ is the $(j-1)$-skeleton of $\Gamma_{\mathbf{c}}$. Moreover, $\beta_{i,\mathbf{c}}(S_{n,d,k})=0$ when  $|\mathbf{c}|\neq k+jd$, for all $j$.
\end{teo}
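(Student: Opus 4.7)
The plan is to compute $\operatorname{Tor}_i^R(S_{n,d,k},\field)_{\mathbf{c}}$ via the Koszul complex on the generators of $R$, and then identify the resulting multigraded complex with the reduced simplicial chain complex of $\Gamma_{\mathbf{c}}^{\langle j-1\rangle}$. Here $R$ is the polynomial ring with one variable $y_m$ for each monomial $m$ of degree $d$ in $S$, so that $S_{n,d,k}$ becomes a multigraded $R$-module via $y_m\mapsto m$, and the $\inte^n$-grading on $S$ descends to the module.

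First I would describe $(K_i(R)\otimes_R S_{n,d,k})_{\mathbf{c}}$ concretely: a $\field$-basis consists of pairs $(F,u)$, where $F=\{m_1,\ldots,m_i\}$ is an $i$-subset of degree-$d$ monomials of $S$ and $u\in S_{n,d,k}$ is a monomial with $m_1\cdots m_i\cdot u=x^{\mathbf{c}}$. The requirement $u\in S_{n,d,k}$ forces $|\mathbf{c}|-id\in\{k,k+d,k+2d,\ldots\}$, so the complex vanishes whenever $|\mathbf{c}|\not\equiv k\pmod d$; this proves the second assertion. Writing $|\mathbf{c}|=k+jd$, the constraint reads $i\le j$.

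Next, I would use that $u=x^{\mathbf{c}}/\prod F$ is uniquely determined by $F$ and $\mathbf{c}$: admissible pairs $(F,u)$ are then in bijection with the $(i-1)$-faces of Paul's pile complex $\Gamma_{\mathbf{c}}$ satisfying $i\le j$, i.e.\ with the $(i-1)$-faces of the $(j-1)$-skeleton $\Gamma_{\mathbf{c}}^{\langle j-1\rangle}$. This yields a multigraded isomorphism $(K_i(R)\otimes_R S_{n,d,k})_{\mathbf{c}}\cong \tilde{C}_{i-1}(\Gamma_{\mathbf{c}}^{\langle j-1\rangle},\field)$. A direct check shows that the Koszul differential acts by $(F,u)\mapsto \sum_l(-1)^{l+1}(F\setminus\{m_l\},\,m_l u)$, matching the reduced simplicial boundary; passing to homology then gives the claimed identity.

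The only subtle point is justifying why it is the $(j-1)$-skeleton, and not the full complex $\Gamma_{\mathbf{c}}$, that appears. This is exactly the degree calculation above: a face with more than $j$ vertices would force the residual monomial to have degree below $k$, so no basis element arises. When $k=0$ the two coincide automatically (no face of $\Gamma_{\mathbf{c}}$ has more than $j$ vertices in that case), and the theorem specializes to Paul's formula for the Veronese ring.
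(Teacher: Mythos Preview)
Your proposal is correct and follows essentially the same route as the paper: both compute the multigraded Betti numbers by tensoring the Koszul complex on the generators of $R$ with $S_{n,d,k}$, identify the degree-$\mathbf{c}$ strand with the reduced chain complex of $\Gamma_{\mathbf{c}}^{\langle j-1\rangle}$ via the bijection $m\,e_{j_1}\wedge\cdots\wedge e_{j_i}\leftrightarrow\{j_1,\ldots,j_i\}$, and observe that the Koszul differential matches the simplicial boundary. Your write-up is in fact a bit more explicit than the paper's about why the $(j-1)$-skeleton (rather than the full $\Gamma_{\mathbf{c}}$) appears and about the $k=0$ specialization; the only cosmetic imprecision is that ``$|\mathbf{c}|\not\equiv k\pmod d$'' should be stated as ``$|\mathbf{c}|\notin\{k,k+d,k+2d,\ldots\}$'' to match the theorem's second assertion exactly.
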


\noindent
Using this tool, in Theorem \ref{theorem-regularity},  we show that the Betti numbers of $S_{n,d,k}$ can be non-zero only in degrees $k+id$ for $i< n$; we also characterize when these modules  are Cohen-Macaulay.

\setcounter{section}{3}\setcounter{teo}{4}
\begin{teo}
The Veronese module $S_{n,d,k}$ is Cohen-Macaulay if and only if $k<d$.
Moreover if $S_{n,d,k}$ is not Cohen-Macaulay, then it has maximal projective dimension, that is $\binom{d+n-1}{d}-1$.
\end{teo}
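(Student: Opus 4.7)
The plan is to split the biconditional into its two directions.

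For $k<d\Rightarrow$ Cohen--Macaulay, I would simply appeal to the Goto--Watanabe theorem cited in the introduction (\cite{GotoWatanabe}; see also \cite[Section 3]{BrunsHerzog}).

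For the converse and the projective dimension claim, the strategy is to exhibit, for each $k\geq d$, a single multidegree $\mathbf{c}$ with $\beta_{N-1,\mathbf{c}}(S_{n,d,k})\neq 0$, where $N:=\binom{d+n-1}{d}$ is the number of variables of $R$. This would force $\operatorname{pd}_R S_{n,d,k}\geq N-1$. For the matching upper bound I observe that $S_{n,d,k}$ embeds as a submodule of the domain $S$, so every variable $y_\alpha$ of $R$ (which maps to the nonzero element $x^\alpha\in S^{(d)}\subset S$) acts on $S_{n,d,k}$ as a nonzerodivisor; hence $\operatorname{depth}_R S_{n,d,k}\geq 1$ and Auslander--Buchsbaum gives $\operatorname{pd}_R S_{n,d,k}\leq N-1$. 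Combining, the projective dimension equals $N-1$ and the depth equals $1$, so (under the standing assumption $n\geq 2$) the module has maximal projective dimension and, since $\dim S_{n,d,k}=n>1$, fails to be Cohen--Macaulay.

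For the concrete multidegree I would set
\[
\mathbf{c}\;=\;\mathbf{s}+(k-d)\mathbf{e}_1,\qquad \mathbf{s}\;:=\;\sum_{|\alpha|=d}\alpha\in\inte^n,
\]
so that $|\mathbf{c}|=Nd+(k-d)=k+(N-1)d$, matching the form $k+jd$ of Theorem 3.1 with $j=N-1$. Since $\mathbf{c}\geq \mathbf{s}$ componentwise, every subset $F\subseteq\{\alpha:|\alpha|=d\}$ satisfies $\sum_{\alpha\in F}\alpha\leq \mathbf{s}\leq \mathbf{c}$, so every such $F$ is a face of $\Gamma_\mathbf{c}$. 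Thus $\Gamma_\mathbf{c}$ is the full $(N-1)$-simplex on its $N$ vertices, its $(N-2)$-skeleton $\Gamma_\mathbf{c}^{\langle N-2\rangle}$ is the boundary of this simplex (homeomorphic to $S^{N-2}$), and its top reduced homology is one-dimensional. Applying Theorem 3.1 gives
\[
\beta_{N-1,\mathbf{c}}(S_{n,d,k})\;=\;\di_\field \tilde{\operatorname{H}}_{N-2}\bigl(\Gamma_\mathbf{c}^{\langle N-2\rangle},\field\bigr)\;=\;1.
\]

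The main obstacle is choosing the multidegree $\mathbf{c}$. The natural candidate $\mathbf{s}$ automatically has total degree $Nd$; only the offset $(k-d)\mathbf{e}_1$ is delicate, and it relies crucially on $k\geq d$. For $k<d$ one cannot add a non-negative shift of size $k-d$, so this particular full-simplex construction breaks down exactly at the Goto--Watanabe threshold, which is consistent with $S_{n,d,k}$ being Cohen--Macaulay precisely below that threshold.
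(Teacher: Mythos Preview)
Your argument is correct, and for the direction $k\geq d$ it follows the same overall strategy as the paper: exhibit a multidegree $\mathbf{c}$ with $|\mathbf{c}|=k+(N-1)d$ for which $\Gamma_{\mathbf{c}}^{\langle N-2\rangle}$ is the boundary of the full $(N-1)$-simplex, then invoke Theorem~3.1. The differences are worth recording.

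For $k<d$ you cite Goto--Watanabe, whereas the paper gives a self-contained proof inside its combinatorial framework: it observes that for $|\mathbf{c}|=k+\alpha d$ with $\alpha\geq N-n+1$ one has $\dim\Gamma_{\mathbf{c}}<\alpha$, so the skeleton is all of $\Gamma_{\mathbf{c}}$, and then Paul's duality formula forces $\tilde{\operatorname H}_{N-n}(\Gamma_{\mathbf{c}})=\tilde{\operatorname H}_{-2}(\Gamma_{\hat{\mathbf{c}}})=0$, whence $\beta_{N-n+1}=0$ and $\operatorname{pdim}=N-n$. This is of independent interest because it shows Theorem~3.1 together with duality already recovers the Cohen--Macaulay direction without appealing to outside results.

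For $k\geq d$ the paper takes the ``balanced'' vector $\bar{\mathbf{c}}$ with all coordinates (essentially) equal to $\frac{k+(N-1)d}{n}$ and then checks, facet by facet, that each $F_i=\mathcal{A}\setminus\{\mathbf{a}_i\}$ lies in $\Gamma_{\bar{\mathbf{c}}}$. Your choice $\mathbf{c}=\mathbf{s}+(k-d)\mathbf{e}_1$ with $\mathbf{s}=\sum_{\alpha\in\mathcal{A}}\alpha$ is slicker: from $\mathbf{c}\geq\mathbf{s}$ componentwise it is immediate that \emph{every} subset of $\mathcal{A}$ is a face, so $\Gamma_{\mathbf{c}}$ is the full simplex and no inequality-checking is needed. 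Both routes land on the same homology computation $\tilde{\operatorname H}_{N-2}(\partial\Delta^{N-1})\cong\Ffield$; yours simply avoids the case analysis on whether $n$ divides $|\mathbf{c}|$.
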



Later, in Theorem \ref{theorem-linear-resolution}, we prove that if $k>d(n-1)-n$, then the resolution of the Veronese module $S_{n,d,k}$ is pure (and actually $\beta_i=\beta_{i,k+id}$).

We also find a general way to compute the rational form of the Hilbert series of the Veronese modules. 
Indeed we prove that:
\setcounter{section}{2}\setcounter{teo}{0}
\begin{teo}
$\frac{d}{dz} HS(S_{n,d,k}; z) =n HS(S_{n+1,d,k-1}; z)$.
\end{teo}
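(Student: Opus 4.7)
The plan is to prove the identity by direct manipulation of the power series, using only the standard fact that $\dim_\field S_m = \binom{m+n-1}{n-1}$ for $S = \field[x_1,\ldots,x_n]$. First, I would expand the Hilbert series according to the definition of the Veronese module:
\[
HS(S_{n,d,k}; z) = \sum_{i \geq 0} \dim_\field S_{k+id}\, z^{k+id} = \sum_{i \geq 0} \binom{k+id+n-1}{n-1}\, z^{k+id}.
\]

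Next, differentiating term by term gives
\[
\frac{d}{dz} HS(S_{n,d,k}; z) = \sum_{i \geq 0} (k+id)\binom{k+id+n-1}{n-1}\, z^{k+id-1},
\]
and then I would apply the elementary binomial identity $m\binom{m+n-1}{n-1} = n\binom{m+n-1}{n}$ with $m = k+id$ to rewrite the coefficient. This produces
\[
\frac{d}{dz} HS(S_{n,d,k}; z) = n \sum_{i \geq 0} \binom{(k-1)+id+n}{n}\, z^{(k-1)+id}.
\]

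Finally, I would recognize the sum on the right as exactly the expansion of $HS(S_{n+1,d,k-1}; z)$, because in $n+1$ variables the dimension in degree $m$ is $\binom{m+n}{n}$. There is no real obstacle here: the entire argument reduces to one standard binomial identity. The only point worth a brief comment is the boundary case $k = 0$, where a formal $z^{-1}$ seemingly appears on the right, but its coefficient is $\binom{n-1}{n} = 0$, consistent with the $i = 0$ term on the left being annihilated by the factor $k + id = 0$.
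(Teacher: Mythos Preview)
Your proof is correct and is essentially identical to the paper's own argument: both expand the series, differentiate termwise, and reduce the coefficient identity to $m\binom{m+n-1}{n-1}=n\binom{m+n-1}{n}$. Your extra remark on the $k=0$ boundary case is a nice addition not made explicit in the paper.
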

\noindent
Hence,
\[
  HS(S_{n,d,k}; z)= \frac{1}{(n-1)!} \frac{d^{n-1}}{dz^{n-1}}\left[ \frac{z^{k+n-1}}{1-z^d}\right].
\]
This allows us to write a closed formula for $HS(S_{n,d,k}; z)$  for $n\leq 3$ (see equations (\ref{eq:HS-2}) and (\ref{eq:HS-3})) and, by 
differentiating, one could get the Hilbert series for larger $n$.

%
Moreover, in Section \ref{sec-n=2} we give an alternative proof to the following theorem about the Betti table for $S_{2,d,k}$, for $k < d$.
\setcounter{section}{4}\setcounter{teo}{0}
\begin{teo}\label{theo-jump}
  If $k<d$, the Veronese module $S_{2,d,k}$ has pure resolution and the Betti table is:
  \[
  \begin{array}{c|cccccccc}
    &0 &1 &\dots &k &k+1 &k+2    &\dots& d-1\\\hline
    k &k+1 &k\binom{d}{1}& \dots &\binom{d}{k}&0&0&\dots&0\\
 
    k+1 &0 &0& \dots&0&\binom{d}{k+2}&2\binom{d}{k+3}&\dots&(d-1-k)\binom{d}{d}
  \end{array}
  \]
  Namely,  $\beta_{i}(S_{2,d,k})=\beta_{i, k+id}(S_{2,d,k})=(k+1-i)\binom{d}{i}$ for $i\leq k$, and $\beta_{i}(S_{2,d,k})=\beta_{i, k+(i+1)d}(S_{2,d,k})=(i-k)\binom{d}{i+1}$ for $i>k$.
\end{teo}
\noindent
This result can be obtained as a consequence of Corollary (3.a.6) in \cite{Green_koszul}.\\

Finally, for $k>0$, we prove the linearity of the first step of the minimal resolution of $S_{n,d,k}$ (see Corollary \ref{cor-linearity-general-n-d-k}).\\

The first section provides a summary of results about the Veronese rings and the definition of Veronese modules.
In the second section we concentrate on their Hilbert series. 
Later, in Section 3, we prove our theoretical result on the Betti numbers of the Veronese modules, we characterize the cases in which they are Cohen-Macaulay, and we give a sufficient condition for the linearity of their minimal graded free resolution.
In Section 4, we deal with the case $n=2$, describing  the Betti diagrams of $S_{2,d,k}$.
Finally, in Section 5, we calculate the Betti tables of $S_{3,4,0}$, $S_{3,5,0}$ and $S_{4,3,0}$.

\setcounter{section}{0}\setcounter{teo}{0}
\section{Preliminaries}\label{sec-Preliminaries}
In this section, we recall the definition of Veronese subring and Veronese modules. We give  also a short summary of  some of the results, known in literature, that relate these with the \emph{squarefree divisor complex}, given by Campillo and Mariju\'{a}n in \cite{CampilloMarijuan} and by Bruns and Herzog in \cite{BrunsHerzogSemi}, and with the \emph{pile simplicial complex}, given by  Paul in \cite{Spaul}.

\begin{defi}
 Let $\mathcal{A}$ be the set
 $\{(a_1,a_2,\dots ,a_n)\in \mathbb{N}^n| \ \sum_{i=1}^n a_i=d\}$.
 The \emph{Veronese subring} of $S$ is the algebra $S^{(d)}=\field[\mathbf{x}^\mathbf{a}| \ \mathbf{a} \in \mathcal{A}]$.  
\end{defi}

\noindent
A presentation of $S^{(d)}$ is given by 
\begin{eqnarray*}
  \phi:\field[y_1,\dots , y_N] &\rightarrow& \field[x_1,\dots , x_n]\\
			    y_i&\mapsto & \mathbf{x}^{\mathbf{a}_i}
\end{eqnarray*}
with $\mathbf{a}_i$ being the $i$-th  element  of $\mathcal{A}$ with respect to the lexicographic order, and $N=\binom{d+n-1}{d}$ being the cardinality of $\mathcal{A}$. From now on, we will use the notation $R$ for $\field[y_1,\dots , y_N]$, and $S$ for $\field[x_1,\dots , x_n]$.
Thus, $S^{(d)}\cong\frac{R}{\textrm{ker}\phi}$.\\

Let us consider the affine numerical semigroup $H\subseteq \mathbb{N}^{n}$ generated by the set $\mathcal{A}$. 
\begin{defi}
  Given an element $\mathbf{h}\in H$, we define the \emph{squarefree divisor complex} to be the simplicial complex  
  \[
    \Delta_{\mathbf{h}}=\left\{\{\mathbf{a}_{i_1},\dots \mathbf{a}_{i_k}\} \subseteq \mathcal{A} |\ \mathbf{x}^{\mathbf{a}_{i_1}+\dots+\mathbf{a}_{i_k}} \mbox{ divides } \mathbf{x}^{\mathbf{h}}\right\}.
  \]
%
%
\end{defi}

\noindent
The following result was proved by Bruns and Herzog (see \cite[Proposition 1.1]{BrunsHerzogSemi}) in a more general setting, here we are only stating the version for the Veronese subrings.
\begin{teo}
 Let $i\in \mathbb{Z}$ and $h\in H$, then
  \[
    \beta_{i,\mathbf{h}}(S^{(d)})= \operatorname{dim}_\Ffield \tilde{\operatorname{H}}_{i-1}(\Delta_{\mathbf{h}},\Ffield).
  \]
\end{teo}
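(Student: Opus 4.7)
The plan is to compute the multigraded Betti numbers of $S^{(d)}$ through the Koszul complex resolving $\Ffield$ over $R$, and then to recognize its $\mathbf{h}$-strand as the reduced simplicial chain complex of $\Delta_{\mathbf{h}}$.

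First I would use
\[
 \beta_{i,\mathbf{h}}(S^{(d)})=\dim_{\Ffield}\operatorname{Tor}^R_i(\Ffield,S^{(d)})_{\mathbf{h}}
\]
and compute $\operatorname{Tor}^R_\bullet(\Ffield,S^{(d)})$ as the homology of $K_\bullet:=K_\bullet(y_1,\ldots,y_N;\Ffield)\otimes_R S^{(d)}$. Concretely, $K_i$ has $\Ffield$-basis consisting of elements of the form $\mathbf{x}^{\mathbf{b}}\,e_{j_1}\wedge\cdots\wedge e_{j_i}$ with $1\leq j_1<\cdots<j_i\leq N$ and $\mathbf{b}\in H$, where each $y_j$ acts on $S^{(d)}$ as multiplication by $\mathbf{x}^{\mathbf{a}_j}$. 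Declaring $\deg e_j=\mathbf{a}_j$ and $\deg\mathbf{x}^{\mathbf{b}}=\mathbf{b}$ makes $K_\bullet$ into an $\inte^n$-multigraded complex.

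Next I would look at the strand in multidegree $\mathbf{h}$. A basis of $(K_i)_{\mathbf{h}}$ is indexed by pairs $(\{j_1<\cdots<j_i\},\mathbf{b})$ satisfying $\mathbf{b}+\sum_k\mathbf{a}_{j_k}=\mathbf{h}$ and $\mathbf{b}\in H$; since $|\mathbf{h}|$ and $id=|\sum_k\mathbf{a}_{j_k}|$ are both multiples of $d$, the latter condition reduces to $\mathbf{b}\in\nat^n$, which by the very definition of $\Delta_{\mathbf{h}}$ is equivalent to $\{\mathbf{a}_{j_1},\ldots,\mathbf{a}_{j_i}\}$ being an $(i-1)$-face of $\Delta_{\mathbf{h}}$. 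Because the face determines $\mathbf{b}=\mathbf{h}-\sum_k\mathbf{a}_{j_k}$ uniquely, one obtains a canonical $\Ffield$-linear isomorphism $(K_i)_{\mathbf{h}}\cong\tilde{C}_{i-1}(\Delta_{\mathbf{h}};\Ffield)$.

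The key step is to check that the Koszul differential corresponds to the reduced simplicial boundary. Applied to the basis vector attached to $F=\{j_1,\ldots,j_i\}$, it yields
\[
 \sum_{k=1}^{i}(-1)^{k+1}\,\mathbf{x}^{\mathbf{b}+\mathbf{a}_{j_k}}\,e_{j_1}\wedge\cdots\widehat{e_{j_k}}\cdots\wedge e_{j_i},
\]
and $\mathbf{x}^{\mathbf{b}+\mathbf{a}_{j_k}}=\mathbf{x}^{\mathbf{h}-\sum_{l\neq k}\mathbf{a}_{j_l}}$ is precisely the monomial attached to the face $F\setminus\{j_k\}$ under the identification above. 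Hence $\partial_K$ becomes the alternating-sum boundary of $F$, showing that $(K_\bullet)_{\mathbf{h}}$ is, after a shift of one in the homological degree, isomorphic to the reduced chain complex $\tilde{C}_\bullet(\Delta_{\mathbf{h}};\Ffield)$. Passing to homology gives $\beta_{i,\mathbf{h}}(S^{(d)})=\dim_{\Ffield}\tilde{\operatorname{H}}_{i-1}(\Delta_{\mathbf{h}},\Ffield)$.

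The main obstacle is purely bookkeeping at the endpoints: one must verify that $(K_0)_{\mathbf{h}}=\Ffield\cdot\mathbf{x}^{\mathbf{h}}$ corresponds to the augmentation term $\tilde{C}_{-1}(\Delta_{\mathbf{h}})=\Ffield$ (so that the case $\mathbf{h}=0$ recovers $\beta_{0,0}=1$ via $\tilde{\operatorname{H}}_{-1}(\{\emptyset\})=\Ffield$), and that for $\mathbf{h}\notin H$ both sides vanish, consistent with $S^{(d)}$ being $H$-graded.
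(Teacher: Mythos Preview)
Your argument is correct, but note that the paper does not actually prove this statement: it is quoted from Bruns and Herzog \cite{BrunsHerzogSemi} in the preliminaries. That said, your Koszul-complex identification is precisely the method the paper uses to prove its own generalization, Theorem~\ref{Theorem-theoretical-results}, for the modules $S_{n,d,k}$; your proof is essentially that argument specialized to $k=0$ (where the skeleton restriction disappears because $\Gamma_{\mathbf{c}}=\Delta_{\mathbf{h}}$ has dimension at most $j-1$ when $|\mathbf{h}|=jd$), and indeed your treatment of the differential and of the boundary cases is somewhat more explicit than the paper's.
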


Let us define the partial ordering in $\mathbb{Z}^{n}$ as $\mathbf{a}\leq \mathbf{b}$ if and only if $\mathbf{b}-\mathbf{a}\in \mathbb{N}^{n}$.

\begin{defi}
 The \emph{pile simplicial complex} of $\mathcal{A}$ is
 $$
 \Gamma_{\mathbf{c}}=\{ F\subseteq \mathcal{A}| \ \sum_{\mathbf{a}\in F}\mathbf{a}\leq \mathbf{c} \}.
 $$
\end{defi}

\noindent
This simplicial complex is equal to the squarefree divisor complex, when $\mathbf{c}$ belongs to the semigroup generated by $\mathcal{A}$.\\
 
\noindent
In \cite[Theorem 1]{Spaul}, Paul first proved a duality formula (proved previously in \cite{dong}), namely:
\begin{equation}\label{duality}
   \tilde{\operatorname{H}}_{i-1}(\Gamma_{\mathbf{c}},\Ffield)\cong \tilde{\operatorname{H}}_{N-n-i-1}(\Gamma_{\hat{\bf{c}}},\Ffield)^\vee,
  \end{equation}
  where $\bf{t}=\sum_{\bf{a}\in \mathcal{A}}\bf{a}$ and $\hat{\bf{c}}=\bf{t}-\bf{c}-\mathbf{1}$.
Then, in \cite[Theorem 7]{Spaul}, he  applied the isomorphism above to obtain the following result.
\begin{teo}
  Let $i\in \mathbb{Z}$ and $\mathbf{c}\in \mathbb{Z}^{n}$, then
  \[
    \beta_{i,\bf{c}}(S^{(d)})= \operatorname{dim} \tilde{\operatorname{H}}_{N-n-i-1}(\Gamma_{\hat{\bf{c}}},\Ffield).
  \]
  \end{teo}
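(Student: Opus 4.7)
The plan is to derive this theorem as a short composition of the Bruns--Herzog formula (Theorem 1.3 in the excerpt) with the duality isomorphism (\ref{duality}), which is precisely the strategy Paul indicates. The argument splits cleanly into the main case $\mathbf{c}\in H$ and the residual case $\mathbf{c}\notin H$.

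For the main case $\mathbf{c}\in H$, I would chain three identifications. First, as noted in the paragraph following the definition of $\Gamma_{\mathbf{c}}$, the pile simplicial complex coincides with the squarefree divisor complex on the semigroup: $\Gamma_{\mathbf{c}}=\Delta_{\mathbf{c}}$. Second, Bruns--Herzog's formula rewrites the Betti number as
\[
  \beta_{i,\mathbf{c}}(S^{(d)}) \;=\; \dim_{\Ffield} \tilde{H}_{i-1}(\Delta_{\mathbf{c}},\Ffield) \;=\; \dim_{\Ffield} \tilde{H}_{i-1}(\Gamma_{\mathbf{c}},\Ffield).
\]
Third, the duality (\ref{duality}) produces $\tilde{H}_{i-1}(\Gamma_{\mathbf{c}},\Ffield)\cong \tilde{H}_{N-n-i-1}(\Gamma_{\hat{\mathbf{c}}},\Ffield)^{\vee}$, and since both groups are finite-dimensional $\Ffield$-vector spaces, passing to $\Ffield$-dimensions removes the dual and yields the claimed identity.

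For the residual case $\mathbf{c}\notin H$, the left-hand side vanishes because the minimal free resolution of $S^{(d)}$ over $R$ is $H$-graded and its Betti numbers are therefore supported in $H$. For the right-hand side, I would apply (\ref{duality}) in the reverse direction to reduce the claim to showing $\tilde{H}_{i-1}(\Gamma_{\mathbf{c}},\Ffield)=0$. When some coordinate of $\mathbf{c}$ is negative this is automatic, since $\Gamma_{\mathbf{c}}$ is then the void complex; when $\mathbf{c}$ is nonnegative but $|\mathbf{c}|$ is not a multiple of $d$, I would exhibit a collapse of $\Gamma_{\mathbf{c}}$ onto a subcomplex $\Gamma_{\mathbf{c}'}$ with $\mathbf{c}'\in H$ and $\mathbf{c}'\le\mathbf{c}$, reducing to the previous paragraph.

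The main obstacle is exactly this last step: justifying the homotopy equivalence between $\Gamma_{\mathbf{c}}$ and $\Gamma_{\mathbf{c}'}$ when $\mathbf{c}$ is nonnegative but lies off the semigroup. The rest of the proof is a purely formal chaining of the two cited theorems, so the real combinatorial content sits in controlling the edge case that makes the identity valid on the whole lattice $\mathbb{Z}^n$ rather than only on $H$.
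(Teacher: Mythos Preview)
The paper does not supply its own proof of this statement; it attributes it to Paul and indicates only that he ``applied the isomorphism above'' (the duality formula~(\ref{duality})) to the Bruns--Herzog result. Your main case $\mathbf{c}\in H$ is exactly that one-line composition, and your treatment of $\mathbf{c}$ with a negative coordinate (so that $\Gamma_{\hat{\mathbf{c}}}$, equivalently $\Gamma_{\mathbf{c}}$, is the void complex) is also correct. So on the substantive part you agree with what the paper says.

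The genuine problem is the remaining residual case $\mathbf{c}\ge 0$ with $d\nmid|\mathbf{c}|$. Your proposed collapse $\Gamma_{\mathbf{c}}\simeq\Gamma_{\mathbf{c}'}$ with $\mathbf{c}'\in H$ cannot salvage the argument, for two reasons. First, even granting such a homotopy equivalence, the ``previous paragraph'' only tells you $\dim\tilde H_{i-1}(\Gamma_{\mathbf{c}'})=\beta_{i,\mathbf{c}'}$, which has no reason to vanish; you would be proving the wrong thing. Second, and more decisively, the identity you are trying to establish is simply \emph{false} in this regime. Take $n=d=2$, so $N=3$, $\mathbf{t}=(3,3)$, and let $\mathbf{c}=(1,2)$, $i=1$. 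Then $\beta_{1,(1,2)}(S^{(2)})=0$ since $|\mathbf{c}|=3$ is odd, while $\hat{\mathbf{c}}=(1,0)$, $\Gamma_{(1,0)}=\{\emptyset\}$, and $\tilde H_{N-n-i-1}(\Gamma_{\hat{\mathbf{c}}})=\tilde H_{-1}(\{\emptyset\})=\Ffield$. Equivalently, on the $\Gamma_{\mathbf{c}}$ side, $\Gamma_{(1,2)}$ consists of the two isolated vertices $(1,1)$ and $(0,2)$, so $\tilde H_0\neq 0$ and no collapse to a contractible complex exists. The upshot is that the hypothesis ``$\mathbf{c}\in\mathbb{Z}^n$'' in the quoted statement must be read with an implicit restriction (e.g.\ $d\mid|\mathbf{c}|$, which is the only case ever used downstream); the obstacle you flagged is not a gap in your proof but a defect in the statement, and once it is read correctly your main-case argument is already the complete proof.
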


From now on, given a vector $\mathbf{c}=(c_1,\dots, c_n)$  in $\inte^n$,  $|\mathbf{c}|
=c_1+c_2+\dots+c_n$  denotes the total degree  of $\mathbf{c}$. 


%


\begin{defi}
  Let $n,d,k \in \mathbb{N}$, the \emph{Veronese modules}, $S_{n,d,k}$,  are defined as $S_{n,d,k}=\oplus_{i\geq 0} S_{k+id}$.
\end{defi}
%


By the Auslander-Buchsbaum formula (see \cite{BrunsHerzog, Twenty-four}), we have that
$$
\mathrm{pdim}(S_{n,d,k})+\mathrm{depth}(S_{n,d,k})=\operatorname{depth}(R)
$$
and $\operatorname{depth}(R)=\operatorname{dim}(R)=\operatorname{emb}(S^{(d)})$.
Moreover $y_1$ is always a non zero-divisor with respect to $S_{n,d,k}$ and so 
\[
  1\leq \operatorname{depth} (S_{n,d,k})\leq \operatorname{dim} (S_{n,d,k})=n
\]
and then
\[
  N-n\leq \operatorname{pdim} (S_{n,d,k})\leq N-1.
\]
In particular $S_{n,d,k}$ is Cohen-Macaulay if and only if  it has depth $n$, i.e. projective dimension  $N-n$.\\

In the following sections, we are going to state that some resolutions are linear even if they are not according to the standard definition (see \cite{Villarreal}).
\begin{defi}
Let $A$ be a polynomial ring, $I$ a graded ideal in $A$, and let $T=A/I$. Consider the minimal free resolution of $T$ by free $A$-modules:
$$
0 \rightarrow \oplus_{i=1}^{\beta_p}A(-d_{pi})\rightarrow \cdots \rightarrow \oplus_{i=1}^{\beta_1}A(-d_{1i})\rightarrow A \rightarrow T \rightarrow 0,
$$
the ideal $I$ has a \emph{pure} resolution if there are $d_1, \dots , d_p$, with $d_i<d_{i+1}$,
such that $d_{1i}=d_1, \dots , d_{pi}=d_p $ for all $i$. 
\end{defi}

We recall the definition of linearity of the resolution of a module, as given by Eisenbud and Goto in \cite{EisenbudGoto}.
\begin{defi}
Let $M$ be a finitely generated graded $S$-module. The module $M$ has
\emph{$p$-linear resolution}, over the polynomial ring $S$, if its minimal free graded resolution has the form:
$$
\cdots \rightarrow S(-p-i)^{\beta_i}\rightarrow \cdots \rightarrow S(-p-1)^{\beta_1} \rightarrow S(-p)^{\beta_0} \rightarrow M \rightarrow 0.
$$
\end{defi}

\begin{defi}
The module $S_{n,d,k}$ has a \emph{pseudo-linear} resolution if its mini\-mal free resolution is pure and, in addition, $\beta_i=\beta_{i, k+id}$ for all $i$.
\end{defi}
\begin{Rem}
Throughout the paper, we are letting the generators of $S_{n,d,k}$ have degree $k$ and the generators of $S^{(d)}$ have degree $d$. If instead we choose $0 $ for the degree of generators of the module $S_{n,d,k}$  and $1$ for the degree of the generators of $S^{(d)}$, we would have that the resolution is pseudo-linear when $\beta_i(S_{n,d,k})=\beta_{i,i}(S_{n,d,k})$. In the last case, the definition of pseudo-linearity coincides with the definition of linearity, given in \cite{EisenbudGoto}, in fact $S_{n,d,k}$ would have a $0$-linear resolution.
\end{Rem}
Finally, let us recall a result by Goto and Watanabe on the canonical module of Veronese ring.
\begin{teo}[Corollary 3.1.3 in \cite{GotoWatanabe}]
The canonical module of $S^{(d)}$ is $S_{n,d,d-n}$.
\end{teo}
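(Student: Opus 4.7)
The plan is to identify the Veronese subring $S^{(d)}$ with the affine semigroup ring $\Ffield[H]$ for $H=\{\mathbf{a}\in\mathbb{N}^n:d\mid|\mathbf{a}|\}$ and then apply the Danilov--Stanley description of the canonical module of a normal affine semigroup ring. The goal is to exhibit an explicit $S^{(d)}$-linear, degree-preserving isomorphism $\omega_{S^{(d)}}\xrightarrow{\sim} S_{n,d,d-n}$ given on monomials by $\mathbf{x}^{\mathbf{a}}\mapsto \mathbf{x}^{\mathbf{a}-\mathbf{1}}$, where $\mathbf{1}=(1,\ldots,1)$.

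First I would note that $H=\mathbb{Z}_{\geq 0}^{n}\cap L$, where $L=\{\mathbf{a}\in\mathbb{Z}^{n}:d\mid|\mathbf{a}|\}$ is a full-rank sublattice of $\mathbb{Z}^n$; since the intersection of a rational polyhedral cone with a sublattice is a normal affine semigroup, $\Ffield[H]=S^{(d)}$ is normal Cohen--Macaulay of Krull dimension $n$, with cone $C=\mathbb{R}_{\geq 0}^{n}$. By the Danilov--Stanley theorem, its canonical module is the ideal
\[
\omega_{S^{(d)}}\;=\;\bigoplus_{\mathbf{a}\in\operatorname{relint}(C)\cap H}\Ffield\cdot\mathbf{x}^{\mathbf{a}},
\]
so $\omega_{S^{(d)}}$ is $\Ffield$-spanned by those $\mathbf{x}^{\mathbf{a}}$ with $a_i\geq 1$ for all $i$ and $d\mid|\mathbf{a}|$.

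The substitution $\mathbf{a}\mapsto \mathbf{a}-\mathbf{1}$ is a bijection from $\operatorname{relint}(C)\cap H$ onto $\{\mathbf{b}\in\mathbb{N}^{n}:|\mathbf{b}|=(d-n)+jd\text{ for some }j\geq 0\}$. Extending $\Ffield$-linearly gives a map $\omega_{S^{(d)}}\to S_{n,d,d-n}$ that shifts total degree uniformly by $-n$, and it is $S^{(d)}$-equivariant because multiplication by $\mathbf{x}^{\mathbf{b}}$ with $|\mathbf{b}|=d$ commutes with the fixed translation by $-\mathbf{1}$. This produces the desired graded isomorphism of $S^{(d)}$-modules.

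The real content of the argument is the Danilov--Stanley formula itself, which can be established via Ishida's complex computing the local cohomology of $\Ffield[H]$ together with local duality, or simply cited from \cite{BrunsHerzog}. The only subtle bookkeeping point arises in the regime $d<n$: there one must check that the smallest total degree of an interior lattice point of $H$, namely the least multiple of $d$ that is at least $n$, indeed matches (after subtracting $n$) the smallest nonnegative integer of the form $(d-n)+jd$, so that the translation genuinely lands inside $S_{n,d,d-n}$ and no summands are missed.
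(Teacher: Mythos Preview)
The paper does not prove this statement; it is quoted from Goto--Watanabe (and also flagged as appearing in \cite[Section 3]{BrunsHerzog}) and placed in the preliminaries without argument. So there is nothing to compare your approach against in this manuscript.

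Your argument via the Danilov--Stanley description of $\omega_{\Ffield[H]}$ for the normal semigroup $H=\{\mathbf{a}\in\nat^{n}:d\mid|\mathbf{a}|\}=\mathbb{R}_{\ge 0}^{n}\cap L$ is correct and is essentially the proof one finds in Bruns--Herzog. The bijection $\mathbf{a}\mapsto\mathbf{a}-\mathbf{1}$ between $\operatorname{relint}(\mathbb{R}_{\ge 0}^{n})\cap H$ and the monomial support of $S_{n,d,d-n}$ is exactly the right identification, and your remark about the $d<n$ case correctly disposes of the only bookkeeping issue. One small slip: in the opening sentence you call the isomorphism ``degree-preserving'' but two paragraphs later say it shifts total degree by $-n$; the latter is what actually happens, so the isomorphism is $\omega_{S^{(d)}}\cong S_{n,d,d-n}(-n)$ in the fine $\inte^{n}$-grading (or simply $\omega_{S^{(d)}}\cong S_{n,d,d-n}$ as ungraded $S^{(d)}$-modules, which is all the theorem asserts). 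You may want to clean up that wording.
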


\section{The Hilbert series of the Veronese modules}\label{sec-HS}

Let us fix some notation. 
We denote by $HS(M; z)$ the Hilbert series of the module $M$.
The Hilbert series of $S_{n,d,k}$ as an $R$-module
is equal to $h(z)/(1-z)^N$, where in the numerator we have the polynomial $P(z)=\sum_{i,j}(-1)^i \beta_{i,j}z^j$.

In the literature, there has been some work trying to find an explicit formula for the Hilbert series and Hilbert polynomial of the Veronese rings. 
Recently, Brenti and Welker showed (see  \cite[Theorem 1.1]{BrentiWelker}) that the Hilbert series of $S_{n,d,0}$ is
\[
  HS(S_{n,d,0}; z)=\frac{\sum_{i=0}^{n} C(d-1, n, id) z^{id}}{(1-z^d)^n},
\]
where 
$C(d-1, n, id)=\#\{\mathbf{a}\in \mathbb{N}^n: |\mathbf{a}|=id, a_j\leq d-1 \mbox{ for any }j\}$.
One could compute the polynomial  $P(z)$ by multiplying the numerator by $(1-z^d)^{N-n}$.
%

%
One observes that $h(z)=\sum_{i=0}^{\alpha} N_i z^{k+id}$, where $\alpha=\lfloor \frac{N(d-1) -k}{d}\rfloor$ and, using Remark 3.2 in \cite{One} one knows that if $k<d$
\[
    N_i=\sum_{s=0}^{i} (-1)^{s} \binom{N}{s}\binom{N-1+d(i-s)+k}{N-1}.
\]

In one variable it is easy to see that the Hilbert Series of $S_{1,d,k}$ is
\begin{equation}\label{eq:hilbert-one-variable}
  HS(S_{1,d,k}; z)=\frac{z^k}{1-z^d}.
\end{equation}
We want to find a direct formula for the Hilbert series of $S_{n,d,k}$ and we use the following property.

\begin{teo}\label{theo:differentiation-HS}
$\frac{d}{dz} HS(S_{n,d,k}; z) =n HS(S_{n+1,d,k-1}; z)$.
\end{teo}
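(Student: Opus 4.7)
The plan is to prove the identity by a direct term-by-term computation on the power series expansion of the Hilbert series. The starting point is the explicit formula
\[
  HS(S_{n,d,k}; z) = \sum_{i \geq 0} \dim_{\field} S_{k+id} \, z^{k+id} = \sum_{i \geq 0} \binom{k+id+n-1}{n-1} z^{k+id},
\]
which follows from $\dim_{\field} S_m = \binom{m+n-1}{n-1}$ for the polynomial ring $S = \field[x_1,\ldots,x_n]$.

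Next, I would differentiate term-by-term. The derivative of the $i$-th summand $\binom{k+id+n-1}{n-1} z^{k+id}$ is $(k+id)\binom{k+id+n-1}{n-1} z^{k+id-1}$. The key combinatorial identity to apply is
\[
  (k+id)\binom{k+id+n-1}{n-1} = n \binom{k+id+n-1}{n},
\]
which one verifies by writing both sides as $\frac{(k+id+n-1)!}{(n-1)!(k+id-1)!}$. (Note the degree shift $k \mapsto k-1$ makes this expression well defined provided $k+id \geq 1$; for $k=0$ the $i=0$ summand is constant and its derivative vanishes, which matches the absence of a degree $-1$ term on the right-hand side.)

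Substituting the identity into the differentiated series gives
\[
  \frac{d}{dz} HS(S_{n,d,k}; z) = n \sum_{i \geq 0} \binom{(k-1)+id+n}{n} z^{(k-1)+id},
\]
and the sum on the right is precisely $HS(S_{n+1,d,k-1}; z)$, again by expanding $\dim_{\field} S'_{m} = \binom{m+n}{n}$ where $S' = \field[x_1,\ldots,x_{n+1}]$. This completes the proof. The computation is essentially routine; the only mild subtlety is the boundary case $k=0$, which is easily handled as indicated above, so there is no real obstacle.
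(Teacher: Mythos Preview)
Your proof is correct and follows essentially the same approach as the paper: both expand the Hilbert series as $\sum_{i\geq 0}\binom{k+id+n-1}{n-1}z^{k+id}$, differentiate term by term, and apply the identity $(k+id)\binom{k+id+n-1}{n-1}=n\binom{(k-1)+id+n}{n}$ via a direct factorial computation. Your remark on the $k=0$ boundary case is a small extra care not made explicit in the paper.
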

\begin{proof}
By definition of $S_{n,d,k}$ the Hilbert series is
\[
  HS(S_{n,d,k}; z)=\sum_{i\geq 0}\binom{k+id+(n-1)}{n-1}z^{k+id}.
\]
Let us consider the first derivative
\[
  \frac{d}{dz}HS(S_{n,d,k}; z)=\sum_{i\geq 0}\binom{k+id+(n-1)}{n-1} (k+di)z^{k+id-1}
\]
and we analyze the coefficient of $z^{k+id-1}$, i.e.
\begin{eqnarray*}
  \binom{k+id+(n-1)}{n-1} (k+di)&=&\frac{(k+id+n-1)!}{(n-1)!(k+id)!}(k+id)\\
  &=&n\frac{(k+id+n-1)!}{(n)!(k+id-1)!}\\
  &=&n\binom{(k-1)+id+n}{n}.
\end{eqnarray*}
\end{proof}

\noindent
As a consequence, we get a direct expression:
\begin{cor}\label{cor-close-formula-HS}
The Hilbert series of the Veronese modules is
%
\[
  HS(S_{n,d,k}; z)= \frac{1}{(n-1)!} \frac{d^{n-1}}{dz^{n-1}}\left[ \frac{z^{k+n-1}}{1-z^d}\right].
\]
\end{cor}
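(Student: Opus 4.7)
The plan is to argue by induction on $n$, using equation (\ref{eq:hilbert-one-variable}) as the base case and Theorem \ref{theo:differentiation-HS} as the inductive step. Everything needed is already in place, so this will be a short verification rather than a genuinely new argument.

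For the base case $n=1$, the claimed identity reduces to
\[
  HS(S_{1,d,k}; z)=\frac{1}{0!}\,\frac{d^{0}}{dz^{0}}\!\left[\frac{z^{k}}{1-z^d}\right]=\frac{z^{k}}{1-z^d},
\]
which is exactly equation (\ref{eq:hilbert-one-variable}). So the base case is immediate.

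For the inductive step, first I would rewrite Theorem \ref{theo:differentiation-HS} in the more convenient form
\[
  HS(S_{n+1,d,k}; z) \;=\; \frac{1}{n}\,\frac{d}{dz}\,HS(S_{n,d,k+1}; z),
\]
obtained by replacing $k$ by $k+1$ in the statement (this shift is harmless since both sides are defined for all $k\geq 0$). Assuming the corollary holds for a given $n$ and all values of $k$, I would then substitute $k+1$ in place of $k$ into the inductive hypothesis to get
\[
  HS(S_{n,d,k+1}; z) \;=\; \frac{1}{(n-1)!}\,\frac{d^{n-1}}{dz^{n-1}}\!\left[\frac{z^{k+n}}{1-z^d}\right],
\]
and chain this with the differentiation identity above to conclude
\[
  HS(S_{n+1,d,k}; z) \;=\; \frac{1}{n\cdot(n-1)!}\,\frac{d}{dz}\,\frac{d^{n-1}}{dz^{n-1}}\!\left[\frac{z^{k+n}}{1-z^d}\right] \;=\; \frac{1}{n!}\,\frac{d^{n}}{dz^{n}}\!\left[\frac{z^{k+(n+1)-1}}{1-z^d}\right],
\]
which is precisely the claimed formula with $n$ replaced by $n+1$.

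There is no real obstacle here: the content of the corollary is entirely supplied by Theorem \ref{theo:differentiation-HS}, and once that is in hand the rest is a bookkeeping exercise with factorials and derivatives. The only point requiring a moment of care is the parameter shift $k \mapsto k+1$ that allows the recursion to move forward in $n$ while keeping $k$ free, but this is just a relabeling and does not involve any additional hypothesis on $k$.
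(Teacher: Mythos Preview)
Your proof is correct and matches the paper's approach exactly: the paper presents the corollary as an immediate consequence of Theorem~\ref{theo:differentiation-HS} together with the one-variable formula~(\ref{eq:hilbert-one-variable}), and your induction on $n$ is the natural way to unwind that. There is nothing to add.
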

\noindent
Therefore, by differentiating $\frac{z^{k+n-1}}{1-z^d}$ one finds $HS(S_{n,d,k}; z)$.\\

\noindent
Using the computer program \texttt{Maple}, 
\begin{verbatim}
  (1/(n-1)!)*diff(z^(k+n-1)/(1-z^d),z$(n-1));
\end{verbatim}
one could compute the Hilbert series up to $n=95$ in $0.970$ second. (We are using a \texttt{Dell OptiPlex 790 with Intel Core i7-2600 (3.40GHz, 8MB) and 16 GB memory, Ubuntu 12.04.4 LTS 64-bit}). In the paper, though, we are going to deal only with the cases $n=2,3$, since, already for $n=4$, the formulas of the Hilbert series and of the polynomial $P(z)$ are quite long and not elegant.\\

Let us write down the general formula for $HS(S_{2,d,k}; z)$ and $HS(S_{3,d,k}; z)$:
\begin{equation}\label{eq:HS-2}
  HS(S_{2,d,k}; z)=\frac{z^k[1+k+(d-k-1)z^d]}{(1-z^d)^2}
\end{equation}
and 
\begin{equation}\label{eq:HS-3}
\begin{split}
HS(S_{3,d,k}; z) &=\frac{{z}^{k}(k+1)(k+2)}{2(1-z^d)^3}\\
  &+\frac{{z}^{k+d}[-2(k+1)(k+2)+d(2k+3+d)]}{2(1-z^d)^3}\\
  &+\frac{{z}^{k+2d}[(k+1)(k+2)-d(2k+3-d)]}{2(1-z^d)^3}
\end{split}
\end{equation}




\noindent
Therefore one could compute the polynomials $P(z)$.
\noindent
Namely, the polynomial $P(z)$ of $S_{2,d,k}$ is
\[
  P(z)=\sum_{i=0}^{d} (-1)^{i+1} (i-(k+1))\binom{d}{i} z^{k+id}.
\]


\noindent
Moreover, the polynomial $P(z)$ of $S_{3,d,k}$ is
\[
  \sum_{i=0}^{N-1}  \frac{(-1)^{i-2}}{2} \left(a\binom{N-3}{i}-(b-2a)\binom{N-3}{i-1}+(a-c)\binom{N-3}{i-2}\right) z^{k+id}.
\]
where $a=(k+1)(k+2)$, $b=d(2k+3+d)$, $c=d(2k+3-d)$ and $N={{d+2}\choose 2}$.

We are going to use these polynomials to compute the Betti numbers of the Veronese module in the next sections.



\section{The Betti table of the Veronese modules}\label{section:general results}

This section contains our main theorem, which gives the connection between  the syzygies of the Veronese modules and the pile simplicial complex. 

\begin{teo}\label{Theorem-theoretical-results}
If $\mathbf{c}$ is a vector in $\inte^n$ such that $|\mathbf{c}|=k+jd$, 
then $$\beta_{i,\mathbf{c}}(S_{n,d,k})=\operatorname{dim}_{\Ffield} \tilde{\operatorname{H}}_{i-1}(\Gamma_{\mathbf{c}}^{\langle j-1 \rangle}, \Ffield),$$
where $\Gamma_{\mathbf{c}}^{\langle j-1 \rangle}$ is the $(j-1)$-skeleton of $\Gamma_{\mathbf{c}}$. Moreover, $\beta_{i,\mathbf{c}}(S_{n,d,k})=0$ when  $|\mathbf{c}|\neq k+jd$, for all $j$. 
\end{teo}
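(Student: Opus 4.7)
The natural framework is to compute $\beta_{i,\mathbf{c}}(S_{n,d,k})$ via Koszul homology. Recall that for any finitely generated graded $R$-module $M$, one has $\beta_{i,\mathbf{c}}(M)=\dim_{\Ffield}\mathrm{Tor}^R_i(M,\Ffield)_{\mathbf{c}}$, and this $\mathrm{Tor}$ can be computed by tensoring the Koszul complex $K_{\bullet}$ on the variables $y_1,\ldots,y_N$ of $R$ (a free resolution of $\Ffield$) with $M=S_{n,d,k}$, where $S_{n,d,k}$ is an $R$-module via $y_i\cdot \mathbf{x}^{\mathbf{b}}=\mathbf{x}^{\mathbf{b}+\mathbf{a}_i}$.

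First, I would describe a $\Ffield$-basis for the multigraded piece $(S_{n,d,k}\otimes_R K_{\bullet})_{\mathbf{c}}$ in Koszul degree $p$: it consists of symbols $\mathbf{x}^{\mathbf{b}}\otimes e_F$ with $F\subseteq[N]$, $|F|=p$, and $\mathbf{b}+\sum_{i\in F}\mathbf{a}_i=\mathbf{c}$, subject to the requirement that $\mathbf{x}^{\mathbf{b}}\in S_{n,d,k}$, i.e.\ $|\mathbf{b}|=k+id$ for some $i\geq 0$. Setting $\mathbf{a}_F:=\sum_{i\in F}\mathbf{a}_i$, the existence of such a $\mathbf{b}$ is equivalent to $\mathbf{a}_F\leq \mathbf{c}$ componentwise (which is precisely $F\in\Gamma_{\mathbf{c}}$) together with $|\mathbf{c}|-|F|d\in\{k,k+d,k+2d,\ldots\}$. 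If $|\mathbf{c}|=k+jd$, this last condition becomes $|F|\leq j$, so the basis is indexed exactly by the faces of the $(j-1)$-skeleton $\Gamma_{\mathbf{c}}^{\langle j-1\rangle}$. If instead $|\mathbf{c}|\neq k+jd$ for every $j$, no admissible $\mathbf{b}$ exists, the entire multigraded complex vanishes, and the second assertion of the theorem follows.

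Second, I would verify that the Koszul differential coincides, up to the usual sign, with the simplicial boundary. Since $\partial^{K}(\mathbf{x}^{\mathbf{b}}\otimes e_F)=\sum_{i\in F}\pm\, \mathbf{x}^{\mathbf{b}+\mathbf{a}_i}\otimes e_{F\setminus i}$ stays in multidegree $\mathbf{c}$, dropping the (uniquely determined) monomial factor identifies $(S_{n,d,k}\otimes K_{\bullet})_{\mathbf{c}}$ with the reduced simplicial chain complex of $\Gamma_{\mathbf{c}}^{\langle j-1\rangle}$ shifted by one (Koszul homological degree $i$ corresponds to $(i-1)$-dimensional faces). Passing to homology yields
\[
  \beta_{i,\mathbf{c}}(S_{n,d,k})=\dim_{\Ffield}\tilde{\operatorname{H}}_{i-1}\bigl(\Gamma_{\mathbf{c}}^{\langle j-1\rangle},\Ffield\bigr).
\]

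The main obstacle is the careful bookkeeping of the truncation to the $(j-1)$-skeleton. In Bruns--Herzog's case $k=0$ with $\mathbf{c}$ in the semigroup generated by $\mathcal{A}$, every face of $\Gamma_{\mathbf{c}}$ contributes and the skeleton constraint disappears; here the module-theoretic condition $|\mathbf{b}|\geq k$ forces $|F|\leq j$ and truncates $\Gamma_{\mathbf{c}}$ at dimension $j-1$. Making sure that the Koszul differential does not leak outside this truncated range (which it does not, since removing a vertex from $F$ strictly decreases $|F|$ and hence keeps us inside the same skeleton) is the delicate point. Once this is checked, the identification of chain complexes is routine and the theorem follows.
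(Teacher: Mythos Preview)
Your proposal is correct and follows essentially the same approach as the paper's own proof: both compute $\beta_{i,\mathbf{c}}$ via the multigraded piece of the Koszul complex $M\otimes_R K_\bullet$, identify its $\Ffield$-basis with the faces of $\Gamma_{\mathbf{c}}^{\langle j-1\rangle}$ through the conditions $\mathbf{a}_F\leq\mathbf{c}$ and $|F|\leq j$, and then observe that the Koszul differential agrees with the simplicial boundary. If anything, your write-up is slightly more explicit than the paper's in checking that the differential stays inside the truncated skeleton and in separating the two constraints (componentwise inequality versus the degree condition forcing $|F|\leq j$).
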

\begin{proof}
In order to compute the Betti numbers of $M=S_{n,d,k}$, we need to consider the homology
of  $\mathbb{K}$, the Koszul complex
of $M$. 
The $i$-th module in the Koszul complex is denoted with $\mathbb{K}_i$ and it is equal to $ \wedge^i M=\oplus M e_{j_1}\wedge \cdots \wedge e_{j_i}$, so its non zero graded components lie in degrees $(k+id,k+(i+1)d, \cdots)$.\\
Given a multidegree $\mathbf{c}$ such that $|\mathbf{c}|\neq k+jd$, for all $j$, one notices that $\di(\mathbb{K}_i)_\mathbf{c}=0$, i.e. $\beta_{i,\mathbf{c}}(M)=0$, for all $i$.\\
Now, let us take $\mathbf{c}$ with $|\mathbf{c}|= k+jd$, for some $k \in \nat$, we aim to prove that:
$$
(\mathbb{K}_i)_\mathbf{c}\cong \tilde{C}_{i-1}(\Gamma_{\mathbf{c}}^{\langle j-1 \rangle}, \Ffield),
$$
where $\tilde{C}_{i-1}(\Gamma_{\mathbf{c}}^{\langle j-1 \rangle})$ is the $(i-1)$-th chain group.
We notice that $\di (\mathbb{K}_i)_\mathbf{c} \neq 0$ if and only if $i\leq j$; similarly, by the definition of skeleton,  $\di (\tilde{C}_{i-1}(\Gamma_{\mathbf{c}}^{\langle j-1 \rangle}))\neq 0$
 if and only if $i\leq j$. This implies that $\beta_{i,k+jd}(M)=0$, for all $i>j$.\\
 Let us consider the case $i\leq j$, and let $0 \neq m e_{j_1}\wedge \cdots \wedge e_{j_i} \in (\mathbb{K}_i)_\mathbf{c}$, where $m=\mathbf{x}^\mathbf{b}$ with $\mathbf{b}\in \nat^n$. This means that $b+j_1+\cdots +j_i=\mathbf{c}$, which implies that $j_1+\cdots +j_i \leq \mathbf{c}$, in each component, i.e. $\{j_1,\dots ,j_i\}\in \Gamma_{\mathbf{c}}$, moreover $\{j_1,\dots ,j_i\}$
is also a face of the $(j-1)$-skeleton, because we were supposing that $i\leq j$. 
Therefore, it is enough to consider the isomorphism that sends $m e_{j_1}\wedge \cdots \wedge e_{j_i}$ to the face $\{j_1,\dots ,j_i\}$. Then, it is easy to see that the differentials
in the two complexes are defined in the same way.
\end{proof}

\begin{Rem}
When $k<d$, we have that $ \tilde{\operatorname{H}}_{i}(\Gamma_{\mathbf{c}}^{\langle j-1 \rangle}, \Ffield)\cong \tilde{\operatorname{H}}_{i}(\Gamma_{\mathbf{c}}, \Ffield)$, for all $\mathbf{c}$ with $|\mathbf{c}|=k+jd$. 
\end{Rem}

\begin{ex}
  We are going to show that $\beta_{2,10}(S_{2,3,4})\neq 0$. 
  Here, $k=4>d=3$ and we show that $\beta_{2,(3,7)}(S_{2,3,4})=1$ by computing the first reduced homology of  $\Gamma_{(3,7)}^{\langle 1 \rangle}$.
  In Figure \ref{fig:gamma58withoutskeletonreduction} we show $\Gamma_{(3,7)}$ and its $1$-skeleton.  
  \begin{figure}[htb]
  \centering
  \begin{tabular}{cc}
  \begin{tikzpicture}[thick, scale=0.5]


  \coordinate (0) at (0,0);
  \coordinate (1) at (0,2);
  \coordinate (2) at (3,2);
  \coordinate (3) at (3,-2);
  
  \node[left] at (0) {\scriptsize{$(0,3)$}};
  \node[left] at (1) {\scriptsize{$(3,0)$}};
  \node[right] at (2) {\scriptsize{$(2,1)$}};
  \node[right] at (3) {\scriptsize{$(1,2)$}};

  \draw (1) -- (0);
  \draw[fill=cof,opacity=0.6] (0) -- (2) -- (3) -- cycle;
  \end{tikzpicture}&\begin{tikzpicture}[thick, scale=0.5]
 \coordinate (0) at (0,0);
  \coordinate (1) at (0,2);
  \coordinate (2) at (3,2);
  \coordinate (3) at (3,-2);
  
  \node[left] at (0) {\scriptsize{$(0,3)$}};
  \node[left] at (1) {\scriptsize{$(3,0)$}};
  \node[right] at (2) {\scriptsize{$(2,1)$}};
  \node[right] at (3) {\scriptsize{$(1,2)$}};

  \draw (1) -- (0);
  \draw (0) -- (2) -- (3) -- cycle;

    \end{tikzpicture}
  \end{tabular}
    \caption{\scriptsize{On the left side: the complexes $\Gamma_{(3,7)}$. On the right side: its $1$-skeleton $\Gamma_{(3,7)}^{\langle 1 \rangle}$.}}\label{fig:gamma58withoutskeletonreduction}
  \end{figure}
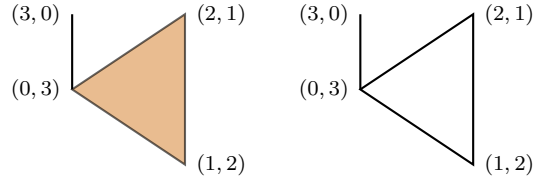

\end{ex}

As we remarked in the previous proof the Betti number $\beta_{i, \mathbf{c}}=0$ if $|\mathbf{c}|\neq k+jd$ for all $j$. 
So, in the rest of the paper, we will consider the following  more \emph{compact} version of the Betti table.
\begin{scriptsize}
\[
  \begin{array}{c|cccccc}
    &0 &\dots  &(N-n) &(N-n+1)  &\dots&(N-1)\\\hline
    k &\beta_{0,k} &\dots&\beta_{N-n, k+(N-n)d}&\beta_{N-n+1, k+(N-n+1)d}&\dots& \beta_{N-1, k+(N-1)d}\\
    
    k+1 &\beta_{0,k+d} & \dots&\beta_{N-n, k+(N-n+1)d}& \beta_{N-n+1, k+(N-n+1+1)d}&\dots&\beta_{N-1, k+Nd}\\
    
    \vdots &&&&&&\\

    k+i &\beta_{0,k+id} & \dots&\beta_{N-n, k+(N-n+i)d}&\beta_{N-n+1, k+(N-n+1+i)d}&\dots&\beta_{N-1, k+(N-1+i)d}\\

    \vdots &&&&&&\\

    k+n-1&\beta_{0,k+(n-1)d} & \dots&\beta_{N-n, k+(N-1)d}&\beta_{N-n+1, k+Nd}&\dots&\beta_{N-1, k+(N+n-2)d}
  \end{array}
\]
\end{scriptsize}


\begin{Rem}\label{theorem-regularity}
  As a direct consequence of Theorem \ref{Theorem-theoretical-results} and of equation (\ref{duality}), one can show that the \emph{compact} Betti diagram of $S_{n,d,k}$ has at most $n$ rows. (This gives a bound on the regularity.)
  This can be also obtained by means of Koszul cohomology.
\end{Rem}
  %

It is  known that $S_{n,d,0}$ is Cohen-Macaulay (see \cite[Proposition 9]{Spaul}) and it is possible to study the projective dimension of $S_{n,d,k}$ via local cohomology techniques (see Chapter 3 in \cite{BrunsHerzog}).
Let us  characterize the Cohen-Macaulayness of the Veronese modules $S_{n,d,k}$, using our combinatorial approach.

\begin{teo}\label{cm}
The Veronese module $S_{n,d,k}$ is Cohen-Macaulay if and only if $k<d$.
Moreover if $S_{n,d,k}$ is not Cohen-Macaulay, then it has maximal projective dimension, $\operatorname{pdim} S_{n,d,k} = N-1$.
\end{teo}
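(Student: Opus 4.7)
The plan is to combine the combinatorial description of Theorem \ref{Theorem-theoretical-results} with the duality \eqref{duality} due to Paul. By Auslander-Buchsbaum and the fact (noted in the preliminaries) that $y_1$ is a non-zerodivisor on $S_{n,d,k}$, one already has $N-n\le\operatorname{pdim}(S_{n,d,k})\le N-1$, and Cohen-Macaulayness is equivalent to $\operatorname{pdim}=N-n$. So it is enough to prove two things: (a) if $k<d$, then $\beta_{i,\mathbf{c}}(S_{n,d,k})=0$ for every $\mathbf{c}$ and every $i>N-n$; (b) if $k\ge d$, then there is a multidegree $\mathbf{c}$ with $\beta_{N-1,\mathbf{c}}(S_{n,d,k})\ne 0$.

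For (a), fix $\mathbf{c}$ with $|\mathbf{c}|=k+jd$. Since every $\mathbf{a}\in\mathcal{A}$ satisfies $|\mathbf{a}|=d$, any face $F\subseteq\mathcal{A}$ of $\Gamma_{\mathbf{c}}$ must satisfy $|F|\cdot d=\sum_{\mathbf{a}\in F}|\mathbf{a}|\le|\mathbf{c}|=k+jd$. When $k<d$ this forces $|F|\le j$, so $\Gamma_{\mathbf{c}}$ has dimension at most $j-1$ and therefore coincides with its $(j-1)$-skeleton (this is the Remark immediately following Theorem \ref{Theorem-theoretical-results}). Combining Theorem \ref{Theorem-theoretical-results} with \eqref{duality} then gives $\beta_{i,\mathbf{c}}(S_{n,d,k})=\dim_{\Ffield}\tilde{\operatorname{H}}_{i-1}(\Gamma_{\mathbf{c}},\Ffield)=\dim_{\Ffield}\tilde{\operatorname{H}}_{N-n-i-1}(\Gamma_{\hat{\mathbf{c}}},\Ffield)$, and the right-hand side vanishes as soon as $N-n-i-1<-1$, i.e.\ for $i>N-n$. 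Hence $S_{n,d,k}$ is Cohen-Macaulay.

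For (b), set $\mathbf{t}=\sum_{\mathbf{a}\in\mathcal{A}}\mathbf{a}$, so $|\mathbf{t}|=Nd$, and choose $\mathbf{c}=\mathbf{t}+(k-d,0,\dots,0)$. Then $|\mathbf{c}|=Nd+(k-d)=k+(N-1)d$, giving $j=N-1$, and the coordinatewise inequality $\mathbf{c}\ge\mathbf{t}$ uses precisely $k\ge d$. Since $\sum_{\mathbf{a}\in\mathcal{A}}\mathbf{a}=\mathbf{t}\le\mathbf{c}$, the full vertex set $\mathcal{A}$ is a face of $\Gamma_{\mathbf{c}}$, so $\Gamma_{\mathbf{c}}$ equals the $(N-1)$-simplex on $\mathcal{A}$. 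Its $(N-2)$-skeleton is the boundary of that simplex, homeomorphic to $S^{N-2}$, hence $\tilde{\operatorname{H}}_{N-2}(\Gamma_{\mathbf{c}}^{\langle N-2\rangle},\Ffield)\cong\Ffield$. Applying Theorem \ref{Theorem-theoretical-results} with $i=N-1$ yields $\beta_{N-1,\mathbf{c}}(S_{n,d,k})=1$, and together with $\operatorname{pdim}(S_{n,d,k})\le N-1$ this forces $\operatorname{pdim}(S_{n,d,k})=N-1$.

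The heavy lifting has already been done in Theorem \ref{Theorem-theoretical-results} and Paul's duality \eqref{duality}, so the proof reduces to the two short topological observations above: a degree shortfall kills the high-dimensional faces of $\Gamma_{\mathbf{c}}$, while a degree surplus lets $\Gamma_{\mathbf{c}}$ swallow the entire vertex set and turn its top skeleton into an $(N-2)$-sphere. I do not expect a real obstacle; the main things to verify are that the duality \eqref{duality} applies to an arbitrary $\mathbf{c}\in\inte^n$ (which is how it is stated in the excerpt) and the little piece of bookkeeping for the coordinatewise inequality $\mathbf{c}\ge\mathbf{t}$.
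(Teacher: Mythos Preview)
Your argument is correct and follows essentially the same strategy as the paper: for $k<d$ you use the dimension bound $\dim\Gamma_{\mathbf{c}}\le j-1$ to replace the skeleton by $\Gamma_{\mathbf{c}}$ and then invoke Paul's duality \eqref{duality}, and for $k\ge d$ you exhibit a multidegree $\mathbf{c}$ with $|\mathbf{c}|=k+(N-1)d$ for which $\Gamma_{\mathbf{c}}^{\langle N-2\rangle}$ is the boundary of the full $(N-1)$-simplex. The only difference is your choice of witness in part (b): the paper takes the ``balanced'' vector $\bar{\mathbf{c}}\approx\frac{k+(N-1)d}{n}\mathbf{1}$ and checks that each codimension-one face $F_i=\mathcal{A}\setminus\{\mathbf{a}_i\}$ lies in $\Gamma_{\bar{\mathbf{c}}}$, whereas your choice $\mathbf{c}=\mathbf{t}+(k-d)e_1$ makes $\mathbf{c}\ge\mathbf{t}$ immediate and hence shows at once that $\Gamma_{\mathbf{c}}$ is the full simplex---a slightly cleaner bookkeeping that avoids the case split on whether $n\mid |\mathbf{c}|$.
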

\begin{proof}
We know that $N-n\leq \operatorname{pdim} S_{n,d,k}\leq N-1$.
We are going to show that if $k<d$ then $\operatorname{pdim} S_{n,d,k}=N-n$ and 
if $k\geq d$ then $\operatorname{pdim}S_{n,d,k}= N-1$.

Let $k<d$.
We want to prove that $\beta_{N-n+1, k+(N-n+1+i)d}=0$ for all $i$.
For simplicity let $\alpha=(N-n+1+i)$ and $|\mathbf{c}|=k+d\alpha$.
We note that $\operatorname{dim}\Gamma_{\mathbf{c}} <\alpha$ and thus  $\Gamma_\mathbf{c}^{\langle \alpha-1 \rangle}=\Gamma_\mathbf{c}$. 
%
%
Using Theorem \ref{Theorem-theoretical-results}, we know that
\begin{equation}\label{hom}
   \beta_{N-n+1,\mathbf{c}}(S_{n,d,k})=
  \operatorname{dim}_{\Ffield} \tilde{\operatorname{H}}_{N-n}
  (\Gamma_{\mathbf{c}}^{\langle N-n+i \rangle}, \Ffield)=
  \operatorname{dim}_{\Ffield} \tilde{\operatorname{H}}_{N-n}(\Gamma_{\mathbf{c}}, \Ffield).
\end{equation}
Applying (\ref{duality}), we get
\[
   \beta_{N-n+1,\mathbf{c}}(S_{n,d,k})=\operatorname{dim}_{\Ffield} 
  \tilde{\operatorname{H}}_{N-n}(\Gamma_{\mathbf{c}}, \Ffield)=\operatorname{dim}_{\Ffield} 
  \tilde{\operatorname{H}}_{-2}(\Gamma_{\hat{\mathbf{c}}}, \Ffield)=0.
\]

Now we prove that if $k\geq d$ then $\operatorname{pdim} S_{n,d,k}= N-1$. 
To prove our statement, it is enough to show that $\beta_{N-1,k+(N-1)d}(S_{n,d,k})\neq 0$.
Since 
\[
    \beta_{N-1,k+(N-1)d}(S_{n,d,k})=\sum_{|\mathbf{c}|=k+(N-1)d}
    \operatorname{dim}_{\Ffield} 
    \tilde{\operatorname{H}}_{N-2}(\Gamma_{\mathbf{c}}^{\langle N-2 \rangle}, \Ffield),
\]
it is sufficient  to prove  that for some $\bar{\mathbf{c}}$, $\operatorname{dim}_{\Ffield} 
\tilde{\operatorname{H}}_{N-2}(\Gamma_{\bar{\mathbf{c}}}^{\langle N-2 \rangle}, \Ffield)\neq 0$.

Let $|\mathbf{c}|=k+(N-1)d$ and assume for simplicity that $|\mathbf{c}|$ is a 
multiple of $n$. 
Consider the case $\bar{\mathbf{c}}=(\frac{k+(N-1)d}{n}, \dots,\frac{k+(N-1)d}{n})$.
Then
$\Gamma_{\bar{\mathbf{c}}}^{\langle N-2 \rangle}$ is the boundary of an $N-1$ dimensional simplex. 
Indeed, we denote by $F_i$ the face of cardinality $N-1$ over the $N$ vertices with $a_i=(i_1, \dots, i_n)$ missing:
$\operatorname{deg} F_i=(\frac{(N-1)d}{n}-i_1, \dots,\frac{(N-1)d}{n}-i_n)$.
Since $\frac{k+(N-1)d}{n}\geq \frac{(N-1)d}{n} \geq \frac{(N-1)d}{n}-i_j$, $F_i\in 
\Gamma_{\bar{\mathbf{c}}}^{\langle N-2 \rangle}$ for any $i$.
%

If $|\mathbf{c}|$ is not a multiple of $n$, write $|\mathbf{c}|=mn+s$ and 
prove (in a similar way) that $\Gamma_{\mathbf{c}^*}^{\langle N-2 \rangle}$ is the boundary of an $N-1$ dimensional simplex with $\mathbf{c}^*=m\mathbf{1}+(s_1,\dots, s_n)$ and $\sum_js_j=s$, $0\leq s_j\leq 1$.
\end{proof}

\begin{Rem}
The equalities in (\ref{hom})
 hold for all $i\geq 1$ also  for $k\geq d$. This follows from the fact that for $i>0$ it is always true that
$\tilde{\operatorname{H}}_{N-n} (\Gamma_{\mathbf{c}}^{\langle N-n+i \rangle}, \Ffield)=  \tilde{\operatorname{H}}_{N-n}(\Gamma_{\mathbf{c}}, \Ffield)$.
Instead, for $i=0$, $\Gamma_\mathbf{c}$ could not be the same of $\Gamma_\mathbf{c}^{\langle N-n \rangle}$.
This implies that, for $k\geq d$, $\beta_{i,k+id}(S_{n,d,k})\neq 0$ for $i=0, \dots, N-n+1$.
\end{Rem}
\subsection{Pseudo-Linearity of $S_{n,d,k}$}

By studying the dimension of the pile simplicial complex one also obtains a sufficient condition for the pseudo-linearity of the resolution.  Namely, as  a straightforward application of Theorem \ref{Theorem-theoretical-results} and of Paul's duality formula, we can prove the following statement. Also in this case, though, Koszul cohomology can also be used to show such result.

\begin{pro}\label{theorem-linear-resolution}
If $k> d(n-1)-n$, then the Veronese module $S_{n,d,k}$ has a \emph{pseudo-linear} resolution.
\end{pro}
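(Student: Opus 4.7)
The plan is to apply Theorem \ref{Theorem-theoretical-results} to reduce pseudo-linearity to the claim that $\beta_{i,\mathbf{c}}(S_{n,d,k})=0$ whenever $|\mathbf{c}|=k+jd$ with $j\neq i$, and then to split the argument into the two cases $j<i$ and $j>i$. The case $j<i$ is immediate: the $(j-1)$-skeleton $\Gamma_{\mathbf{c}}^{\langle j-1\rangle}$ has no face of dimension $\geq j$, so the chain group $\tilde{C}_{i-1}(\Gamma_{\mathbf{c}}^{\langle j-1\rangle})$ vanishes and Theorem \ref{Theorem-theoretical-results} forces $\beta_{i,\mathbf{c}}=0$.

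For $j>i$ I would first invoke the standard fact that a simplicial complex $X$ and its $k$-skeleton $X^{\langle k \rangle}$ have isomorphic reduced homology in degrees strictly below $k$ (their reduced chain complexes agree in dimensions $\leq k$). Taking $k=j-1$ and noting that $i-1<j-1$, this yields
\[
  \tilde{H}_{i-1}(\Gamma_{\mathbf{c}}^{\langle j-1\rangle},\Ffield)\;\cong\;\tilde{H}_{i-1}(\Gamma_{\mathbf{c}},\Ffield).
\]
Applying Paul's duality formula (\ref{duality}) on the right-hand side then converts the desired vanishing into $\tilde{H}_{N-n-i-1}(\Gamma_{\hat{\mathbf{c}}},\Ffield)=0$, where $\hat{\mathbf{c}}=\mathbf{t}-\mathbf{c}-\mathbf{1}$, and the problem becomes a purely combinatorial dimension estimate on $\Gamma_{\hat{\mathbf{c}}}$.

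The crucial step is bounding $\dim\Gamma_{\hat{\mathbf{c}}}$. Any face of $\Gamma_{\hat{\mathbf{c}}}$ of cardinality $m$ satisfies $md\leq|\hat{\mathbf{c}}|=Nd-(k+n)-jd$. The hypothesis $k>d(n-1)-n$ rewrites as $k+n>d(n-1)$, so $md<d(N-j-n+1)$ and hence $m\leq N-j-n$. Combined with $j\geq i+1$, this gives $m\leq N-n-i-1$, that is $\dim\Gamma_{\hat{\mathbf{c}}}\leq N-n-i-2$, which kills $\tilde{H}_{N-n-i-1}(\Gamma_{\hat{\mathbf{c}}},\Ffield)$ for trivial dimension reasons. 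The only real subtlety, rather than an obstacle, is checking that the threshold $k>d(n-1)-n$ is calibrated exactly to handle the borderline case $j=i+1$; for larger $j$ the same estimate goes through with room to spare. Putting the two cases together with Theorem \ref{Theorem-theoretical-results} shows $\beta_{i,\mathbf{c}}(S_{n,d,k})=0$ whenever $|\mathbf{c}|\neq k+id$, which is exactly pseudo-linearity.
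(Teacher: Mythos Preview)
Your argument is correct and follows exactly the route the paper indicates: the paper states that the proposition is ``a straightforward application of Theorem \ref{Theorem-theoretical-results} and of Paul's duality formula'' obtained ``by studying the dimension of the pile simplicial complex,'' and that is precisely what you do---reduce to $\tilde H_{i-1}(\Gamma_{\mathbf c})$ via the skeleton, dualize to $\Gamma_{\hat{\mathbf c}}$, and bound $\dim\Gamma_{\hat{\mathbf c}}$ using $md\le|\hat{\mathbf c}|$ together with the hypothesis $k+n>d(n-1)$. Your write-up simply fills in the details the paper leaves implicit.
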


%
%


\begin{ex}
For $S_{3,3,3}$ we have that $k = d(n-1)-n$ and using \texttt{Macaulay2} \cite{M2} for calculating the Betti numbers one can see that the resolution is not even pure.
\end{ex}

%

\begin{Rem}\label{cor-linearity-n-2}
If $k\geq d-1$, then the Veronese module $S_{2,d,k}$ has \emph{pseudo-linear} resolution.
Moreover, by the knowledge of the Hilbert series (see Section \ref{sec-HS}) and using the fact that the resolution is pure, we have that 
$$
\beta_{i, k+id}(S_{2,d,k})=|i-(k+1)|\binom{d}{i}.
$$
  %
%
Similiarly, when $k\geq 2d-2$, then the Veronese module $S_{3,d,k}$ has \emph{pseudo-linear} resolution.
In this case the Betti numbers are
$$
\beta_{i, k+id}(S_{3,d,k})=\left|\frac{1}{2} \left(a\binom{N-3}{i}-(b-2a)\binom{N-3}{i-1}+(a-c)\binom{N-3}{i-2}\right)\right|,
$$
where $a,b,c$ have been defined in Section \ref{sec-HS}.
\end{Rem}

Using the Eagon-Northcott resolution, it is well known that $\field[x,y]^{(d)}$ has a  linear resolution.
We provide another  proof for this fact:
\begin{cor}\label{cor-veronese-2-variables}
The resolution of the Veronese subring $\field[x,y]^{(d)}$ is \emph{linear}.
\end{cor}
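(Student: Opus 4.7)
The key observation is that the Veronese subring $\field[x,y]^{(d)}=\bigoplus_{i\ge0}\field[x,y]_{id}$ coincides with the Veronese module $S_{2,d,0}$, so the statement is exactly the $k=0$ case of the preceding theorem. The plan is therefore to invoke Theorem~\ref{theo-jump} directly: the hypothesis $k<d$ holds automatically (for $d\ge 1$), and the theorem already hands us the full Betti diagram.

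Specialising to $k=0$, the top row of that table collapses to the single entry $\beta_{0,0}(S_{2,d,0})=1$, while the bottom row gives
\[
\beta_i(S_{2,d,0}) \;=\; \beta_{i,(i+1)d}(S_{2,d,0}) \;=\; i\binom{d}{i+1}, \qquad 1\le i\le d-1.
\]
In particular the resolution is pure and, at every homological step $i\ge 1$, concentrated in the single internal degree $(i+1)d$, which is the minimal admissible shift (an increment of $d$ per step) relative to the preceding step.

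Passing to the standard polynomial grading on $R$ in which each $y_j$ has degree $1$ rather than $d$ (i.e.\ dividing internal degrees by $d$), the resolution takes the shape
\[
0\to R(-d)^{\beta_{d-1}}\to\cdots\to R(-i-1)^{\beta_i}\to\cdots\to R(-2)^{\beta_1}\to R\to \field[x,y]^{(d)}\to 0.
\]
Hence the defining ideal $I=\ker\phi$ is generated in degree $2$ and its $j$-th syzygies live in degree $j+2$; equivalently, $I$ admits a $2$-linear resolution. This is the classical meaning of linearity for the coordinate ring of the rational normal curve of degree $d$, and recovers the output of the Eagon--Northcott construction.

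The argument is purely bookkeeping, so there is no substantive obstacle; the only point that requires care is the translation between the internal $d$-grading used throughout the paper and the polynomial grading in which the Eisenbud--Goto notion of $p$-linearity is formulated. Once that conversion is made explicit, the corollary is an immediate consequence of Theorem~\ref{theo-jump}.
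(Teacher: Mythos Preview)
Your argument is correct: specialising Theorem~\ref{theo-jump} to $k=0$ yields the full Betti table of $S_{2,d,0}=\field[x,y]^{(d)}$, and after passing to the standard grading on $R$ one reads off a $2$-linear resolution of the defining ideal. There is no circularity, since the inductive proof of Theorem~\ref{theo-jump} does not rely on the present corollary (its base case $k=0$ only uses the trivial fact that $I$ contains no linear forms).

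This is, however, a genuinely different route from the paper's. In the paper the corollary sits in Section~3, \emph{before} Theorem~\ref{theo-jump} is proved in Section~4, so your appeal to ``the preceding theorem'' is in fact a forward reference. The paper's own proof is a one-line computation with Paul's duality formula~(\ref{duality}): Cohen--Macaulayness (Theorem~\ref{cm}) reduces the question to the vanishing of a single extremal Betti number $\beta_{d-1,(d+1)d}$, and duality converts the relevant reduced homology into $\tilde{\operatorname H}_{-1}(\Gamma_{\hat{\mathbf c}},\Ffield)$ for some $\hat{\mathbf c}$ with $|\hat{\mathbf c}|=-2$, which is automatically zero. Your approach imports the whole Mayer--Vietoris induction underlying Theorem~\ref{theo-jump} and so is considerably heavier, but it buys more: you obtain the explicit Betti numbers $\beta_i=i\binom{d}{i+1}$, not just linearity. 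The paper's approach is lighter and self-contained within Section~3. Both are legitimate; they simply illustrate the trade-off between a targeted vanishing argument and a structural one that computes the entire resolution.
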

\begin{proof}
Since $\field[x,y]^{(d)}$ is Cohen-Macaulay, it is enough to show that $\beta_{d-1,(d+1)d}$.
By using (\ref{duality}), $\beta_{d-1,(d+1)d}=\operatorname{dim}_{\Ffield} \tilde{\operatorname{H}}_{d-2}(\Gamma_{\mathbf{c}}, \Ffield)=\operatorname{dim}_{\Ffield} \tilde{\operatorname{H}}_{-1}(\Gamma_{\hat{\mathbf{c}}}, \Ffield)=0$, since $|\hat{\mathbf{c}}|=-2$.
\end{proof}

%
In general, we are able to say that the resolution of $S_{n,d,k}$ is always pseudo-linear in the first step. In particular this is true for the canonical module of $S^{(d)}$. 
The Betti number 
$\beta_{0,i}(M)$ of
a graded module $M$ gives the number of generators of $M$ in degree $i$. Thus 
$\beta_0(S_{n,d,k})=
\beta_{0,k}(S_{n,d,k})={k+n-1\choose k}$. \\
In the following, we will prove that $\beta_1(S_{n,d,k})= \beta_{1,k+d}(S_{n,d,k})$.


\begin{defi}
 Given two vectors $v,w \in \mathbb{N}^n$, we say that $v$ is obtained from $w$ with 
 an \emph{elementary move} if $v=w+(e_i-e_j)$, where $e_i$ denotes the standard vector 
 in $\mathbb{N}^n$.
\end{defi}

\begin{pro}
The pile simplicial complexes $\Gamma_{\mathbf{c}}$ are connected for 
$|\mathbf{c}|=k+id$ for $i>1$, $k\neq 0$.
\end{pro}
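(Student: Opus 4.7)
The plan is to split the connectedness problem into two sub-problems: first, link any two vertices of $\Gamma_{\mathbf{c}}$ by a chain in which consecutive vertices differ by an elementary move; second, show that any such pair shares a common neighbor in the $1$-skeleton. Fix vertices $\mathbf{a}, \mathbf{a}'$ of $\Gamma_{\mathbf{c}}$, so $\mathbf{a}, \mathbf{a}' \in \mathcal{A}$ with $\mathbf{a}, \mathbf{a}' \leq \mathbf{c}$. For the first step, whenever the current vertex $\mathbf{b}_\ell \neq \mathbf{a}'$, equality of total weights produces indices $p, q$ with $b_{\ell,p} < a'_p$ and $b_{\ell,q} > a'_q$; I would set $\mathbf{b}_{\ell+1} = \mathbf{b}_\ell + e_p - e_q$, noting that the only coordinate that grows satisfies $b_{\ell+1,p} \leq a'_p \leq c_p$, so $\mathbf{b}_{\ell+1}$ is still in $\Gamma_{\mathbf{c}}$. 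The quantity $\sum_j |b_{\ell,j}-a'_j|$ strictly decreases, so the chain reaches $\mathbf{a}'$ in finitely many steps.

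For the second step, assume $\mathbf{b}, \mathbf{b}' = \mathbf{b} + e_p - e_q \in \Gamma_{\mathbf{c}}$ and set $\mathbf{u} = \mathbf{c} - \mathbf{b} - e_p$. The condition $\mathbf{b}' \leq \mathbf{c}$ gives $c_p \geq b_p + 1$, so $\mathbf{u} \in \mathbb{N}^n$, and
\[
|\mathbf{u}| = |\mathbf{c}| - d - 1 = k + (i-1)d - 1 \geq d,
\]
using $i \geq 2$ and $k \geq 1$, with equality exactly when $i=2$, $k=1$. Hence there is some $\mathbf{a}^* \in \mathcal{A}$ with $\mathbf{a}^* \leq \mathbf{u}$. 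Then $\mathbf{a}^* + \mathbf{b} \leq \mathbf{c} - e_p \leq \mathbf{c}$ and $\mathbf{a}^* + \mathbf{b}' = \mathbf{a}^* + \mathbf{b} + e_p - e_q \leq \mathbf{c} - e_q \leq \mathbf{c}$, so $\{\mathbf{a}^*, \mathbf{b}\}$ and $\{\mathbf{a}^*, \mathbf{b}'\}$ are edges of $\Gamma_{\mathbf{c}}$; in the degenerate case $\mathbf{a}^* \in \{\mathbf{b}, \mathbf{b}'\}$, the same inequalities force $\mathbf{b}+\mathbf{b}'\leq \mathbf{c}$, producing a direct edge.

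Concatenating along the chain from the first step then yields a walk from $\mathbf{a}$ to $\mathbf{a}'$ in the $1$-skeleton of $\Gamma_{\mathbf{c}}$, proving connectedness. The main obstacle, and the reason both hypotheses $i>1$ and $k\neq 0$ appear in the statement, is justifying the inequality $|\mathbf{u}|\geq d$: it is tight at $i=2$, $k=1$ and genuinely fails for $i=1$ or $k=0$, so the argument cannot be relaxed to those regimes without additional input.
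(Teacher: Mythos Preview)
Your proof is correct and follows the same two-step scheme as the paper: reduce to pairs of vertices of $\Gamma_{\mathbf{c}}$ that differ by a single elementary move, then exhibit a common neighbor for each such pair. Your execution of the second step---an existence argument via $|\mathbf{c}-\mathbf{b}-e_p|\geq d$---is somewhat cleaner than the paper's explicit coordinate-by-coordinate construction of $u$ (which requires a case split when the first coordinate would become negative), and your first step is more careful in verifying that the intermediate $\mathbf{b}_\ell$ remain vertices of $\Gamma_{\mathbf{c}}$, a point the paper glosses over.
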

\begin{proof}
We remark that $\Gamma_{\mathbf{c}}^{\langle 0 \rangle}$ is the set of vertices of 
the pile simplicial complex.
Given two vertices in $v,w\in \Gamma_{\mathbf{c}}^{\langle 0 \rangle}$ one gets $w$ from $v$
by a finite number of elementary moves and this implies that the \emph{graph of elementary moves}
on $\Gamma_{\mathbf{c}}^{\langle 0 \rangle}$ is connected.

\noindent
Therefore, it is enough to prove if $v,w \in \Gamma_{\mathbf{c}}^{\langle 0 \rangle}$ such that 
$v=w+(e_j-e_l)$, then there exists $u \in \Gamma_{\mathbf{c}}^{\langle 0 \rangle}$ such that 
$\{ v,u\}$ and $\{w,u\}$ are edges of $\Gamma_{\mathbf{c}}$.

For simplicity, set $i=2$, $j=1$ and $l=2$, i.e. $v=w+(1,-1,0,\dots ,0)$.
Such $u$ should respect the following inequalities:
\begin{eqnarray*}
    u_i+w_i &\leq& c_i, \  1\leq i\leq n.\\
    u_i+v_i &\leq& c_i, \  1\leq i\leq n.
\end{eqnarray*}
but $v_1=w_1+1$ and so the sufficient constrains are
\begin{eqnarray*}
    u_1+w_1 &<&c_1,\\
    u_i+w_i &\leq& c_i, \  2\leq i\leq n.
\end{eqnarray*}
Let us choose $u_i=c_i-w_i$, $2\leq i \leq n$, and $u_1=d-\sum_{2}^n u_i$: note that $u_1=c_1-k-id+d-w_1$, so $u_1+w_1=c_1+d(2-i)-k<c_1$, when $k\neq 0 $ and $i\geq 2$. This choice does not work in case $u_1<0$, that is $d+w_2+\cdots +w_n<c_2+\cdots +c_n$. In this circumstance, we set $u_1=0$, $u_i=\min \{d-\sum_{j=1}^{i-1} u_j, c_i-w_i\}$ and $u_n=d-\sum_{j=1}^{n-1} u_j$, and this vector $u$ satisfies the inequalities above. Namely, it can happen that $u_j=c_j-w_j$, for $2\leq j < i$, and $u_i=d-u_1-\cdots -u_{i-1}$, then in this case $u_{i+1}=\cdots = u_n=0$, and the inequalities are satisfied.  The other case is when $u_j=c_j-w_j$ for $2\leq j <n-1$ and $u_n=d-u_1-\cdots -u_n$, then $u_n+w_n=
d+w_2+\cdots +w_n-c_2-\cdots - c_{n-1}<c_n$, and the other inequalities are also trivially satisfied.
\end{proof}

\begin{cor}\label{cor-beta-1-general}
  If $k> 0$, then $\beta_{1,k+d}(S_{n,d,k})\neq 0$ and that $\beta_{1,k+id}(S_{n,d,k})=0$, for all $i> 1$.
\end{cor}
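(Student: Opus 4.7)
The plan is to use Theorem \ref{Theorem-theoretical-results}, which says that for $|\mathbf{c}|=k+id$
\[
  \beta_{1,\mathbf{c}}(S_{n,d,k})=\dim_{\Ffield}\tilde{H}_{0}(\Gamma_{\mathbf{c}}^{\langle i-1 \rangle},\Ffield),
\]
so the entire question becomes one about the $0$-th reduced homology of the $(i-1)$-skeleton of the pile complex. The two halves of the corollary then split naturally into a connectivity statement (for $i>1$) and the construction of a disconnected $0$-skeleton (for $i=1$).

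For the vanishing part, fix $i\geq 2$ and any $\mathbf{c}$ with $|\mathbf{c}|=k+id$. Since $i-1\geq 1$, the $(i-1)$-skeleton $\Gamma_{\mathbf{c}}^{\langle i-1 \rangle}$ contains the full $1$-skeleton of $\Gamma_{\mathbf{c}}$, so it has exactly the same connected components as $\Gamma_{\mathbf{c}}$ itself. The Proposition that immediately precedes the corollary states precisely that $\Gamma_{\mathbf{c}}$ is connected whenever $k\neq 0$ and $i>1$; hence $\tilde{H}_0(\Gamma_{\mathbf{c}}^{\langle i-1 \rangle},\Ffield)=0$, and summing the vanishing over all admissible $\mathbf{c}$ yields $\beta_{1,k+id}(S_{n,d,k})=0$ for all $i>1$.

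For the non-vanishing part, it suffices to exhibit a single multidegree $\mathbf{c}$ with $|\mathbf{c}|=k+d$ for which $\Gamma_{\mathbf{c}}^{\langle 0 \rangle}$ carries at least two vertices, because the $0$-skeleton has no edges and its reduced $0$-th homology has dimension equal to the number of vertices minus one. The natural candidate (taking $n\geq 2$, the only non-trivial case) is $\mathbf{c}=(d,k,0,\dots,0)$: the elements $(d,0,0,\dots,0)$ and $(d-1,1,0,\dots,0)$ both lie in $\mathcal{A}$ and satisfy $\mathbf{a}\leq \mathbf{c}$ thanks to $k\geq 1$. These give two isolated $0$-simplices in $\Gamma_{\mathbf{c}}^{\langle 0 \rangle}$, so $\tilde{H}_0(\Gamma_{\mathbf{c}}^{\langle 0 \rangle},\Ffield)\neq 0$, which immediately forces $\beta_{1,k+d}(S_{n,d,k})\neq 0$.

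There is no real obstacle once the preceding connectivity proposition is in hand; the only subtleties are the observation that truncating $\Gamma_{\mathbf{c}}$ to its $(i-1)$-skeleton for $i\geq 2$ preserves all edges and so does not affect $\tilde{H}_0$, and the choice of an explicit witness $\mathbf{c}$ producing two incomparable vertices of $\Gamma_{\mathbf{c}}$ in the generating-degree case.
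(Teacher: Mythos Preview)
Your proof is correct and follows the same approach as the paper's: the paper's own proof is a single sentence citing Theorem~\ref{Theorem-theoretical-results} and the preceding connectivity proposition, and you spell out exactly how those two ingredients combine. The one place where you add content is the non-vanishing claim $\beta_{1,k+d}\neq 0$: the paper leaves this essentially implicit, whereas you exhibit an explicit multidegree $\mathbf{c}=(d,k,0,\dots,0)$ whose $0$-skeleton $\Gamma_{\mathbf{c}}^{\langle 0\rangle}$ has at least two vertices, which is a clean and correct way to complete the argument (and you correctly note the tacit assumption $n\geq 2$, since for $n=1$ the module is free and $\beta_1=0$).
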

\begin{proof}
  The statement follows using Theorem \ref{Theorem-theoretical-results} and from the previous proposition applied to $\beta_{1,k+id}(S_{n,d,k})$.
  %
\end{proof}

\begin{cor}\label{cor-linearity-general-n-d-k}
  For $k\neq 0$, the first step of the resolution of $S_{n,d,k}$ is pseudo-linear, and the first three entries of the Betti table, $\beta_{0,k}, \beta_{1,k+d}$ and $\beta_{2,k+2d}$, can be then determined with the Hilbert series.
\end{cor}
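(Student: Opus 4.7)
The plan is to prove the two assertions of the corollary separately, relying heavily on Theorem \ref{Theorem-theoretical-results} and on Corollary \ref{cor-beta-1-general}.

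\textbf{Pseudo-linearity of the first step.} The claim amounts to $\beta_1(S_{n,d,k}) = \beta_{1,k+d}(S_{n,d,k})$. By Theorem \ref{Theorem-theoretical-results}, $\beta_{1,j}(S_{n,d,k})$ can be nonzero only for $j\in\{k+id : i\ge 0\}$. Since $M=S_{n,d,k}$ is minimally generated in the single degree $k$, $\beta_{1,k}(M)=0$, and Corollary \ref{cor-beta-1-general} kills $\beta_{1,k+id}(M)$ for every $i\ge 2$. Hence only $\beta_{1,k+d}(M)$ survives, so $\beta_1(M)=\beta_{1,k+d}(M)$.

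\textbf{Reading off the three entries from $HS$.} Consider the numerator polynomial
\[
  P(z)\;:=\;HS(S_{n,d,k};z)\,(1-z)^N\;=\;\sum_{i,j}(-1)^i\beta_{i,j}(S_{n,d,k})\,z^j,
\]
and extract the coefficients of $z^k$, $z^{k+d}$, $z^{k+2d}$. For each of these three degrees I will show that, among all Betti numbers $\beta_{i,k+td}$ with the same internal degree, at most the target entry is nonzero.

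At degree $k$: only $\beta_{0,k}(M)$ is possibly nonzero, because a minimal graded resolution has no higher syzygies in the generator degree.

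At degree $k+d$ (so $j=1$ in Theorem \ref{Theorem-theoretical-results}): the entry $\beta_{0,k+d}(M)$ vanishes since $M$ is generated in degree $k$; and for $i\ge 2$, Theorem \ref{Theorem-theoretical-results} identifies $\beta_{i,\mathbf{c}}(M)$ with $\dim_\Ffield\tilde H_{i-1}(\Gamma_{\mathbf{c}}^{\langle 0\rangle},\Ffield)$, which is zero because a $0$-skeleton is a set of isolated points and has no reduced homology above dimension $0$. Therefore the coefficient of $z^{k+d}$ in $P(z)$ equals $-\beta_{1,k+d}(M)$.

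At degree $k+2d$ (so $j=2$): $\beta_{0,k+2d}(M)=0$ again because $M$ is generated in degree $k$, $\beta_{1,k+2d}(M)=0$ by Corollary \ref{cor-beta-1-general}, and for $i\ge 3$ Theorem \ref{Theorem-theoretical-results} identifies $\beta_{i,\mathbf{c}}(M)$ with $\dim_\Ffield\tilde H_{i-1}(\Gamma_{\mathbf{c}}^{\langle 1\rangle},\Ffield)$, which vanishes because a $1$-skeleton is a graph and has no reduced homology above dimension $1$. Hence the coefficient of $z^{k+2d}$ in $P(z)$ equals $\beta_{2,k+2d}(M)$.

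Combining the two parts one recovers $\beta_{0,k}$, $\beta_{1,k+d}$ and $\beta_{2,k+2d}$ directly from the three relevant coefficients of the known polynomial $P(z)$. There is no real obstacle here: the whole argument is a bookkeeping exercise on the compact Betti table, and the decisive combinatorial lever is the skeleton bound built into Theorem \ref{Theorem-theoretical-results}, which forces the strand in internal degree $k+jd$ to have homological degree at most $j$.
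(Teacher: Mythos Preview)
Your argument is correct and matches the paper's intended reasoning: the paper gives no explicit proof of this corollary, treating it as an immediate consequence of Corollary~\ref{cor-beta-1-general} together with the skeleton bound $\beta_{i,k+jd}=0$ for $i>j$ established in the proof of Theorem~\ref{Theorem-theoretical-results}. Your write-up simply makes those two ingredients explicit and reads the three coefficients off the numerator $P(z)$, which is exactly what the paper has in mind.
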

\noindent
The previous result can be derived using Koszul cohomology. Nevertheless, this combinatorial proof shows how the  topological properties of the pile simplicial complex can imply trivially algebraic features of the Veronese modules.

%

\section{The resolution of $S_{2,d,k}$}\label{sec-n=2}
%
%
Let us consider the Veronese modules $S_{2,d,k}$.
By Theorem \ref{cm} these modules are Cohen-Macaulay if and only if $k<d$. 
In this section, we are going to give a full description of their Betti diagram using the methods introduced in the previous sections.

It is important to remark that this result can be also obtained as a consequence of Corollary 3.a.6 and Theorem 1.b.4. in  \cite{Green_koszul}, by means of Koszul cohomology.
\begin{teo}\label{theo-jump}
  If $k<d$, the Veronese module $S_{2,d,k}$ has pure resolution and the Betti table is:
  \[
  \begin{array}{c|cccccccc}
    &0 &1 &\dots &k &k+1 &k+2    &\dots& d-1\\\hline
    k &k+1 &k\binom{d}{1}& \dots &\binom{d}{k}&0&0&\dots&0\\
 
    k+1 &0 &0& \dots&0&\binom{d}{k+2}&2\binom{d}{k+3}&\dots&(d-1-k)\binom{d}{d}
  \end{array}
  \]
  Namely,  $\beta_{i}(S_{2,d,k})=\beta_{i, k+id}(S_{2,d,k})=(k+1-i)\binom{d}{i}$ for $i\leq k$, and $\beta_{i}(S_{2,d,k})=\beta_{i, k+(i+1)d}(S_{2,d,k})=(i-k)\binom{d}{i+1}$ for $i>k$.
\end{teo}
\noindent

For a given $f$-dimensional simplicial complex $\Delta$, we denote by $\Delta(i)$ the $i$-th pure skeleton of $\Delta$, for $0\leq i \leq f$.

\begin{lem}\label{lemma-v0-in-pure-part}
  Let $|\mathbf{c}|=|(c_1,c_2)|=k+(k+1)d$ and $c_1\leq c_2$. 
  If $\operatorname{dim} \Gamma_{\mathbf{c}}=k$, then $\mathbf{a}_0=(0,d)$ is a vertex of $\Gamma_{\mathbf{c}}(k)$.
\end{lem}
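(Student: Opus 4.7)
The plan is to translate the condition ``$(k+1)$-subset $F\subseteq\mathcal{A}$ is a $k$-face of $\Gamma_{\mathbf{c}}$'' into a purely arithmetic condition on a subset sum, and then to exhibit such an $F$ containing the vertex $(0,d)$ by an interval-intersection argument. Parametrize $\mathcal{A}=\{\mathbf{a}_i=(i,d-i):0\le i\le d\}$ and for a $(k+1)$-subset $F\subseteq\{0,\dots,d\}$ set $\sigma(F)=\sum_{i\in F}i$. The face condition $\sum_{i\in F}\mathbf{a}_i\le\mathbf{c}$ reads $\sigma(F)\le c_1$ and $(k+1)d-\sigma(F)\le c_2$; using $c_1+c_2=k+(k+1)d$ these combine into the single constraint
\[
c_1-k\;\le\;\sigma(F)\;\le\;c_1.
\]
So the $k$-faces of $\Gamma_{\mathbf{c}}$ are exactly the $(k+1)$-subsets of $\{0,\dots,d\}$ whose sum lies in this interval.

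Next I would unpack $\dim\Gamma_{\mathbf{c}}=k$: it supplies some $(k+1)$-subset $F_0$ satisfying the display, so in particular $c_1\ge\sigma(F_0)\ge 0+1+\cdots+k=\binom{k+1}{2}$. Now $(k+1)$-subsets of $\{0,\dots,d\}$ containing $0$ are in bijection with $k$-subsets $G\subseteq\{1,\dots,d\}$, and a standard exchange argument shows that $\sigma(G)$ attains \emph{every} integer value in the range $\bigl[\binom{k+1}{2},\,kd-\binom{k}{2}\bigr]$. So it suffices to show that this range meets $[c_1-k,c_1]$. The left overlap is handled by $c_1\ge\binom{k+1}{2}$ from the previous step, and the right overlap reduces to $c_1-k\le kd-\binom{k}{2}$.

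For this last inequality I would invoke the hypothesis $c_1\le c_2$, which gives $c_1\le\tfrac12\bigl(k+(k+1)d\bigr)$; substituting and simplifying reduces the required bound to the elementary inequality $(k-1)d\ge k(k-2)$. This is immediate for $k\le 2$, and otherwise follows from the running assumption $k<d$ of Theorem \ref{theo-jump}. Any $k$-subset $G\subseteq\{1,\dots,d\}$ whose sum lies in the intersection then yields the desired $k$-face $\{0\}\cup G$ of $\Gamma_{\mathbf{c}}$ containing $\mathbf{a}_0=(0,d)$.

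The only place I expect the argument to require care is the upper-overlap inequality when $c_1$ is close to $\tfrac12\bigl(k+(k+1)d\bigr)$; with the standing hypothesis $k<d$ this reduces to the elementary bound above, but without it one can cook up boundary cases (e.g.\ $k=0$, $c_1>0$) where $\mathbf{a}_0$ is simply not a vertex of $\Gamma_{\mathbf{c}}^{\langle k\rangle}$, so these hypotheses must be tracked carefully throughout.
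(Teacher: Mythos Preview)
Your argument is correct and is genuinely different from the paper's. The paper proceeds constructively: it takes an arbitrary $k$-facet $F=\{\mathbf{a}_{j_0},\dots,\mathbf{a}_{j_k}\}$, first pushes the indices $j_k,j_{k-1},\dots$ up as far as possible to force the ``slack'' in the first coordinate to zero, and then, if the smallest index is still too large, performs a second round of exchanges to drag it down below $k$ so that it can be swapped for $0$. Each of these two stages requires a separate feasibility check, and in both cases the paper verifies the needed inequality by appealing to $c_1\le c_2$.

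Your route bypasses all of this manipulation by first translating ``$k$-face of $\Gamma_{\mathbf{c}}$'' into the single arithmetic window $c_1-k\le\sigma(F)\le c_1$, and then observing that the set of sums $\sigma(G)$ over $k$-subsets $G\subseteq\{1,\dots,d\}$ is exactly the integer interval $\bigl[\binom{k+1}{2},\,kd-\binom{k}{2}\bigr]$. The problem then collapses to checking that two integer intervals meet, which you do with the two endpoint inequalities; the first comes for free from $\dim\Gamma_{\mathbf{c}}=k$, and the second from $c_1\le c_2$ together with $(k-1)d\ge k(k-2)$. This is cleaner and more transparent than the paper's iterative procedure, and it makes explicit exactly where each hypothesis is consumed. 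The paper's approach, on the other hand, has the minor virtue of actually producing the desired facet by an explicit algorithm starting from any given one.

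Your closing caveat about $k=0$ is well placed: the lemma as stated is in fact false at $k=0$ (take $d\ge 2$, $\mathbf{c}=(1,d-1)$), and the paper silently relies on the fact that this case is handled separately as the base of the induction in Theorem~\ref{theo-jump}. It would be worth saying explicitly in your write-up that you assume $k\ge 1$, and noting that the appeal to $k<d$ for $k\ge 3$ is legitimate because the lemma is only invoked inside that theorem.
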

\begin{proof}
  We observe that if $k\geq d-1$, then the statement is trivially true.
  So let us assume that $k\leq d-1$.
  
  We denote by $\mathbf{a}_{i}=(i, d-i)$.
  Since $\operatorname{dim} \Gamma_{\mathbf{c}}=k$, then there exists a $k$-dimensional face (facet) $$F=\{\mathbf{a}_{j_0}, \mathbf{a}_{j_1}, \dots, \mathbf{a}_{j_{k}}\},$$
  where $j_0<j_1<\dots <j_k$.
  If $j_0=0$, then $\mathbf{a}_0\in F$ and this implies $\mathbf{a}_0\in \Gamma_{\mathbf{c}}(k)$.
  
  Let us assume that $\mathbf{a}_0\notin F$.
  Since $F\in \Gamma_{\mathbf{c}}(k)$, then
  \begin{eqnarray}
    j_0+j_1+\dots+j_{k}&=&c_1-m_1,\\\label{eq:suc1}
    (k+1)d-(j_0+j_1+\dots+j_{k})&=&c_2-m_2,\label{eq:suc1}
  \end{eqnarray}
  for some $m_i\in \nat$ such that $m_1+m_2=k$.
  
  We are going to show that there exists a facet $G\in\Gamma_{\mathbf{c}}(k)$ with $m_1=0$ (and so $m_2=k$).
  To do that we increase as much as possible $j_k$, $j_{k-1}$, etc (notice that $j_k\leq d$, $j_{k-1}\leq d-1$ and $j_{k-l}\leq d-l$).
  This is possible in all the cases where
  \begin{equation}\label{eq:spaziosopra}
    \sum_{l=0}^{k} (d-l)-j_{k-l}>k,
  \end{equation}
  because $m_1$ can be at most $k$. 
  
  The condition $c_1\leq c_2$, implies equation (\ref{eq:spaziosopra}).
  Indeed, if $\sum_{l=0}^{k} (d-l)-j_{k-l}=k$, then 
  \begin{eqnarray*}
    c_1&=&(k+1)d -\frac{k(k+1)}{2}-k+m_1,\\\label{eq:suc1}
    c_2&=&k+\frac{k(k+1)}{2}+m_2.\label{eq:suc1}
  \end{eqnarray*}
  Imposing $c_1\leq c_2$, one gets $d\leq k+1$, against the initial assumption.
  
  Let $G=\{\mathbf{a}_{g_0}, \mathbf{a}_{g_1}, \dots, \mathbf{a}_{g_{k}}\}$.
  If $g_0\leq k$, then we consider $$H=(G\setminus\{\mathbf{a}_{g_0}\}) \cup \{\mathbf{a}_{0}\}$$
  and this facet belongs to $\Gamma_{\mathbf{c}}(k)$.
  
  If $g_0 > k$, we produce a facet $L=\{\mathbf{a}_{l_i}: 0\leq i\leq k\}$ with $l_0\leq k$, $m_1=0$ and $m_2=k$.
  To do that we decrease the value of $g_0$ by one and we add one to $g_k$, if $g_k$ is not maximal, or to $g_{k-1}$, if $g_{k-1}$ is not maximal, etc.
  We repeat this procedure until $g_0\leq k$.
  
  To show that this method always work, note that $g_0-k$ is at most $d-k$.
  We need that
  \begin{equation}\label{eq:spaziosopra2}
    \sum_{l=0}^{k-1} (d-l)-j_{k-l}>d-k.
  \end{equation}
  Again, with similar computation, the condition $c_1\leq c_2$, implies equation (\ref{eq:spaziosopra2}).
\end{proof}

\begin{proof}[Proof of Theorem \ref{theo-jump}]
From the Hilbert series, we know that $\beta_{k, k+(k+1)d}-\beta_{k+1,k+(k+1)d}=0$. We are going to prove that $\beta_{k+1,k+(k+1)d}=0$, where $\beta_{k+1,k+(k+1)d}=\sum_{|\mathbf{c}|=k+(k+1)d} \operatorname{dim}_{\Ffield} \tilde{\operatorname{H}}_{k}(\Gamma_{\mathbf{c}}, \Ffield)$.

If for some $\mathbf{c}$, $\operatorname{dim}(\Gamma_\mathbf{c})<k$, then the $k$-th reduced homology is $0$. In the other cases, we can use the fact that $\tilde{\operatorname{H}}_{k}(\Gamma_{\mathbf{c}}, \Ffield)\cong \tilde{\operatorname{H}}_{k}(\Gamma_{\mathbf{c}}(k), \Ffield)$. Let us denote by $\Delta_{d,k}$ the $k$-th pure skeleton $\Gamma_{\mathbf{c}}(k)$.

 By Lemma \ref{lemma-v0-in-pure-part}, we know that $\mathbf{a}_0\in \Delta_{d,k}$.
 
 \noindent
 The idea of the proof is to decompose $\Delta_{d,k}$ into the union of two subcomplexes and then relate their reduced homology with the one of $\Delta_{d,k}$. Namely,  
 $$\Delta_{d,k} = C \cup D,$$
 with $C=\{ F\in \Delta_{d,k} | \ \operatorname{dim}(F)=k \ \mathrm{and} \ \mathbf{a}_0\in F \}$ and $D=\{ F\in \Delta_{d,k} | \ \operatorname{dim}(F)=k \ \mathrm{and} \ \mathbf{a}_0\notin F \}$.\\
 We will prove by induction on $k$ and $d$ that  the $k$-th reduced homology of $\Delta_{d,k}$ is zero. To do this, by Mayer-Vietoris long exact sequence, it is enough to prove that both $D$ and $C$ have zero $k$-th reduced homology and that their intersection has zero $(k-1)$-reduced homology.
The bases of this double induction are  $k=0$, and $d=1$: when  $k=0$, $S_{2,d,0}$ is the Veronese ring, and the assertion is trivial; for $d=1$, since $k<d$ this implies the case $k=0$. \\
Now let us consider $k$ and $d$, with $k<d$, and let us assume that $\tilde{\operatorname{H}}_{k}(\Delta_{d,k}, \Ffield)\cong 0$  for all $k'<k$ and $d'<d$. \\
The complex $C$ is always a cone over $\mathbf{a}_0$, thus $\tilde{\operatorname{H}}_{j}(C,\Ffield)\cong 0$, for all $j$.\\
The simplicial complex $D$ has zero $k$-th reduced homology by induction on $d$. A facet $\{ \mathbf{a}_{i_0}, \dots , \mathbf{a}_{i_k} \}$ contained in $D$ satisfies the inequalities:
\[
c_1-k \leq  i_0+\cdots + i_k \leq c_1.
\]
Moreover, since $\mathbf{a}_0\notin D$, $i_j\geq 1$, for all $j$. Thus, we may consider $\mathbf{b}_{i_0}=\mathbf{a}_{i_0}-(1,0)=(i_0-1,d-i_0), \dots , \mathbf{b}_{i_k}=\mathbf{a}_{i_k}-(1,0)=(i_k-1,d-i_k)$, and we get that
$\{ \mathbf{a}_{i_0}, \dots , \mathbf{a}_{i_k} \}\in D$ if and only if $\{ \mathbf{b}_{i_0}, \dots , \mathbf{b}_{i_k} \}$ belongs to the simplicial complex $\Gamma_{\mathbf{c}-(k+1,0)}$ (on vertex set $\{ (0,d-1), (1,d-2), \dots , (d-1,0)\}$)(i.e. $|\mathbf{c}-(k+1,0)|=k+(k+1)(d-1)$). Thus $\tilde{\operatorname{H}}_{k}(D,\Ffield)\cong 0$  by induction on $d$.\\

Now, let us prove that the $(k-1)$-th reduced homology of $C\cap D$ is zero. First of all, notice that $\operatorname{dim}(C\cap D)\leq k-1$. If $\operatorname{dim}(S\cap D)<k-1$, then we are done. Thus we assume that $\operatorname{dim}(C\cap D)=k-1$. Let us consider a facet $F=\{ \mathbf{a}_{i_1}, \dots , \mathbf{a}_{i_k}  \}\in C\cap D$. Since $F\in D$, there exists $j\neq 0$ such that $F\cup \{\mathbf{a}_j\}\in D$, thus $F$ satisfies the inequality:
$$
i_1+\dots +i_k\leq c_1-1.
$$
On the other hand, $F\in C$, so $F\cup\{ \mathbf{a}_0\}$, thus $c_1-k\leq i_1+\dots +i_k$.\\
Thus if we consider $(c_1-1,c_2-d)$ we have that $S\cap D$ is isomorphic to $\Gamma_{(c_1-1,c_2-d)}(k-1)$, with $|(c_1-1,c_2-d)|=k-1+kd$. Thus, by induction on $k$, $\tilde{\operatorname{H}}_{k-1}(C\cap D, \Ffield)\cong 0$.
\end{proof}

\section{Applications in three and four variables}
In this section we calculate the Betti tables of the Veronese rings, $S_{3,4,0}$, $S_{3,5,0}$ and $S_{4,3,0}$,  using the knowledge of the Hilbert series and the regularity (see Remark \ref{theorem-regularity}).
By duality, these give also the Betti tables of their canonical modules, $S_{3,4,1}$, $S_{3,5,2}$ and $S_{4,3,2}$.

\noindent
The number of rows of the Betti tables for these Veronese rings and modules is  three.
By using \texttt{Macaulay2}, we are able to compute the first \emph{row} of the Betti table of  $S_{3,4,1}$, $S_{3,5,2}$ and $S_{4,3,2}$.
Finally, by knowing the polynomials $P(z)$ of $S_{3,4,0}$, $S_{3,5,0}$ and $S_{4,3,0}$, we derive all their Betti numbers. 
The results obtained support the conjecture that the resolution of the Veronese ring is linear until homological degree $3d-3$ (see \cite{EinLazarsfeld,OttavianiPaolettiVeronese}). 
 
\paragraph{The Betti table of $S_{3,4,0}$.}
The first row  of the Betti table of  $S_{3,4,1}$ is:
  \begin{scriptsize}
  \begin{verbatim}
            0  1  2 
     total: 3 24 55 
         1: 3  .  . 
         2: .  .  . 
         3: .  .  . 
         4: . 24  . 
         5: .  .  . 
         6: .  .  . 
         7: .  . 55          
  \end{verbatim}
  \end{scriptsize}
  
\noindent
  We compute the polynomial  $P(z)$ of $S_{3,4,0}$:
  \[
     \begin{split}
     h(z)=1-75\,{z}^{2}+536\,{z}^{3}-1947\,{z}^{4}+4488\,{z}^{5}-7095\,{z}^{6}+7920\,{z}^{7}+\\
    -6237\,{z}^{8}+3344\,{z}^{9}-1089\,{z}^{10}+120\,{z}^{11}+55\,{z}^{12}-24\,{z}^{13}+3\,{z}^{14}.
    \end{split}
  \]
  Hence the compact Betti table  of $S_{3,4,0}$ is:
  \begin{scriptsize}
  \begin{verbatim}
            0  1    2     3     4     5     6     7     8     9   10  11 12
         0: 1  .    .     .     .     .     .     .     .     .    .   .  .
         1: . 75  536  1947  4488  7095  7920  6237  3344  1089  120   .  .
         2: .  .    .     .     .     .     .     .     .     .   55  24  3  
  \end{verbatim}
  \end{scriptsize}
 \paragraph{The Betti table of $S_{3,5,0}$.}
  The first row of the Betti table of  $S_{3,5,2}$, in the compact form, is:
  \begin{scriptsize}
  \begin{verbatim}
            0   1    2     3     4     5  6
         2: 6  90  595  2160  4200  2002  .   
  \end{verbatim}
  \end{scriptsize}
  
\noindent
  The polynomial $P(z)$ of $S_{3,5,0}$ using  the formula in Section \ref{sec-HS} is:
  \begin{scriptsize}
  \[
    \begin{split}
    &h(z)= 1-165{z}^{2}+1830{z}^{3}-10710{z}^{4}+41616{z}^{5}-117300{z}^{6}+250920{z}^{7}+\\
    &	  -417690\,{z}^{8}+548080\,{z}^{9}-568854\,{z}^{10}+464100\,{z}^{11}-291720\,{z}^{12}+134640\,{z}^{13}+\\
    &	  -39780\,{z}^{14}+2856\,{z}^{15}+3825\,{z}^{16}-2160\,{z}^{17}+595\,{z}^{18}-90\,{z}^{19}+6\,{z}^{20}.
    \end{split}
  \]
  \end{scriptsize}
  \newline
  \noindent
  The following is the compact Betti table of $S_{3,5,0}$.
  \begin{scriptsize}
  \begin{verbatim}
            0   1     2  ...      12     13    14    15    16   17 18
         0: 1   .     .    .       .      .     .     .     .    .  .
         1: . 165  1830  ...  134640  39780  4858   375     .    .  .
         2: .  .      .    .       .   2002  4200  2160   595   90  6  
  \end{verbatim}
  \end{scriptsize}

\paragraph{The Betti table of $S_{4,3,0}$.} First of all, since $S_{4,3,0}$ is Cohen-Macaulay, we prove that $\beta_{16, 19\cdot 3}=0$. 
  Indeed, using (\ref{duality}), 
  $$\beta_{16, 57}=\sum_{|\mathbf{c}|=57}\operatorname{dim}\tilde{H}_{15}(\Gamma_{\mathbf{c}})=\sum_{|\mathbf{c}|=-1}\operatorname{dim}\tilde{H}_{-1}(\Gamma_{\mathbf{c}})=0.$$
  These implies that the compact form of the Betti table has only three rows.\\  
  In this case, we compute the polynomial $P(z)$ of $S_{4,3,0}$ by differentiating $\nicefrac{z^3}{(1-z^3)}$ (see Corollary \ref{cor-close-formula-HS}) and by multiplying the numerator by $(1-z^3)^{16}$:
  \begin{scriptsize}
  \[
    \begin{split}
	 & h(z)=1-126\,{z}^{2}+1200\,{z}^{3}-5940\,{z}^{4}+19152\,{z}^{5}-43680\,{z}^{6}+73008\,{z}^{7}-90090\,{z}^{8}+80080\,{z}^{9}+\\
	 & -46332\,{z}^{10}+9360\,{z}^{11}+12012\,{z}^{12}-15120\,{z}^{13}+9360\,{z}^{14}-3696\,{z}^{15}+945\,{z}^{16}-144\,{z}^{17}+10\,{z}^{18}.    
    \end{split}
\]
  \end{scriptsize}
\noindent  
 The first \emph{line} of the compact Betti diagram of $S_{4,3,2}$ is:
  \begin{scriptsize}
  \begin{verbatim}
             0    1    2     3     4      5      6     7     8    9  10
         1: 10  144  945  3696  9360  15120  14003  5400  1650  220   .   
  \end{verbatim}
  \end{scriptsize}  
\noindent
  Finally the Betti numbers of $S_{4,3,0}$ are:
  \begin{scriptsize}
  \begin{verbatim}
            0   1  ...      6      7      8      9      10     11    12    13   14   15  16
         0: 1   .  ...      .      .      .      .       .      .     .     .    .    .   .
         1: . 126  ...  73008  90090  80300  47982   14760   1991     .     .    .    .   .
         2: .   .  ...      .    220   1650   5400   14003  15120  9360  3696  945  144  10  
  \end{verbatim}
  \end{scriptsize}
In principle, the procedure could be extended to other cases, $S_{3,d,0}$ with $d\geq 6$ (since the regularity is still $2$), or even to cases, like $S_{4,d,0}$ with $d\geq 4$, where the regularity is $3$: these last cases could be obtained by calculating the first row of the Betti diagram of the Veronese ring, the first row of the Betti diagram of its canonical module, and, by the knowledge of the Hilbert series, one could get the second row of the Betti table of the Veronese ring. Unfortunately, with our computers (see Section \ref{sec-HS} for more details about our equipment), we were not able to go any further in computations.\\


\noindent
\textbf{Acknowledgements}\\
We would like to thank Ralf Fr\"oberg and Mats Boij, for their great support and useful suggestions. \\
We thank the anonymous referee for the useful comments.\\
Some computations in this paper were performed by using Maple(TM); Maple is a trademark of Waterloo Maple Inc. Also, some computations were made using Macaulay 2 (see \cite{M2}).

\begin{small}

\addcontentsline{toc}{section}{Bibliography}
\bibliographystyle{siam}
\bibliography{Veronese}

\begin{thebibliography}{10}

\bibitem{AramovaBarcanescuHerzog}
{\sc A.~Aramova, {\c{S}}.~B{\u{a}}rc{\u{a}}nescu, and J.~Herzog}, {\em On the
  rate of relative {V}eronese submodules}, Rev. Roumaine Math. Pures Appl., 40
  (1995), pp.~243--251.

\bibitem{One}
{\sc J.~Backelin and A.~Oneto}, {\em On a class of power ideals}.
\newblock \url{http://people.su.se/~aonet/Research.html}.

\bibitem{BrentiWelker}
{\sc F.~Brenti and V.~Welker}, {\em The {V}eronese construction for formal
  power series and graded algebras}, Adv. in Appl. Math., 42 (2009),
  pp.~545--556.

\bibitem{BrunsConcaRomer}
{\sc W.~Bruns, A.~Conca, and T.~R{\"o}mer}, {\em Koszul homology and syzygies
  of {V}eronese subalgebras}, Math. Ann., 351 (2011), pp.~761--779.

\bibitem{BrunsHerzog}
{\sc W.~Bruns and J.~Herzog}, {\em Cohen-{M}acaulay rings}, vol.~39 of
  Cambridge Studies in Advanced Mathematics, Cambridge University Press,
  Cambridge, 1993.

\bibitem{BrunsHerzogSemi}
\leavevmode\vrule height 2pt depth -1.6pt width 23pt, {\em Semigroup rings and
  simplicial complexes}, J. Pure Appl. Algebra, 122 (1997), pp.~185--208.

\bibitem{CampilloGimenez}
{\sc A.~Campillo and P.~Gimenez}, {\em Syzygies of affine toric varieties}, J.
  Algebra, 225 (2000), pp.~142--161.

\bibitem{CampilloMarijuan}
{\sc A.~Campillo and C.~Marijuan}, {\em Higher order relations for a numerical
  semigroup}, S\'em. Th\'eor. Nombres Bordeaux (2), 3 (1991), pp.~249--260.

\bibitem{dong}
{\sc X.~Dong}, {\em Canonical modules of semigroup rings and a conjecture of
  {R}einer}, Discrete Comput. Geom., 27 (2002), pp.~85--97.
\newblock Geometric combinatorics (San Francisco, CA/Davis, CA, 2000).

\bibitem{EinLazarsfeld}
{\sc L.~Ein and R.~Lazarsfeld}, {\em Asymptotic syzygies of algebraic
  varieties}, Invent. Math., 190 (2012), pp.~603--646.

\bibitem{EisenbudGoto}
{\sc D.~Eisenbud and S.~Goto}, {\em Linear free resolutions and minimal
  multiplicity}, J. Algebra, 88 (1984), pp.~89--133.

\bibitem{GotoWatanabe}
{\sc S.~Goto and K.~Watanabe}, {\em On graded rings. {I}}, J. Math. Soc. Japan,
  30 (1978), pp.~179--213.

\bibitem{M2}
{\sc D.~R. Grayson and M.~E. Stillman}, {\em Macaulay2, a software system for
  research in algebraic geometry}.
\newblock Available at http://www.math.uiuc.edu/Macaulay2/.

\bibitem{Green_koszul}
{\sc M.~L. Green}, {\em Koszul cohomology and the geometry of projective
  varieties}, J. Differential Geom., 19 (1984), pp.~125--171.

\bibitem{Twenty-four}
{\sc S.~B. Iyengar, G.~J. Leuschke, A.~Leykin, C.~Miller, E.~Miller, A.~K.
  Singh, and U.~Walther}, {\em Twenty-four hours of local cohomology}, vol.~87
  of Graduate Studies in Mathematics, American Mathematical Society,
  Providence, RI, 2007.

\bibitem{KaraReinerWachs}
{\sc D.~B. Karaguezian, V.~Reiner, and M.~L. Wachs}, {\em Matching complexes,
  bounded degree graph complexes, and weight spaces of {${\rm
  GL}_n$}-complexes}, J. Algebra, 239 (2001), pp.~77--92.

\bibitem{OttavianiPaolettiVeronese}
{\sc G.~Ottaviani and R.~Paoletti}, {\em Syzygies of {V}eronese embeddings},
  Compositio Math., 125 (2001), pp.~31--37.

\bibitem{Spaul}
{\sc S.~Paul}, {\em A duality theorem for syzygies of {V}eronese ideals of
  weighted projective space}.
\newblock arXiv: 1311.5653v2.

\bibitem{Stanley}
{\sc R.~P. Stanley}, {\em Combinatorics and commutative algebra}, vol.~41 of
  Progress in Mathematics, Birkh\"auser Boston, Inc., Boston, MA, second~ed.,
  1996.

\bibitem{Sturmfels}
{\sc B.~Sturmfels}, {\em Gr\"obner bases and convex polytopes}, vol.~8 of
  University Lecture Series, American Mathematical Society, Providence, RI,
  1996.

\bibitem{Villarreal}
{\sc R.~H. Villarreal}, {\em Monomial algebras}, vol.~238 of Monographs and
  Textbooks in Pure and Applied Mathematics, Marcel Dekker Inc., New York,
  2001.

\bibitem{ThanhVu}
{\sc T.~Vu}, {\em The {K}oszul property of pinched {V}eronese varieties}.
\newblock arXiv: 1309.3033v1.

\end{thebibliography}

 \end{small}

\noindent
 {\scshape Ornella Greco}\\ 
 {\scshape Department of Mathematics, Royal Institute of Technology, S-10044 Stockholm, Sweden}.\\
 {\itshape E-mail address}: \texttt{ogreco@kth.se}\\
 
\noindent
 {\scshape Ivan Martino}\\
 {\scshape D\'{e}partment de Math\'{e}matiques, Universit\'{e} de Fribourg, CH-1700 Fribourg, Suisse}.\\
 {\itshape E-mail address}: \texttt{ivan.martino@unifr.ch}

\end{document}